\documentclass[12pt, reqno]{amsart}
\usepackage{amsmath, amsthm, amscd, amsfonts, amssymb, graphicx, color}
\usepackage[bookmarksnumbered, colorlinks, plainpages]{hyperref}
\hypersetup{colorlinks=true,linkcolor=red, anchorcolor=green, citecolor=cyan, urlcolor=red, filecolor=magenta, pdftoolbar=true}

\usepackage{tensor}

\textheight 22.5truecm \textwidth 14.5truecm
\setlength{\oddsidemargin}{0.35in}\setlength{\evensidemargin}{0.35in}

\setlength{\topmargin}{-.5cm}

\newtheorem{theorem}{Theorem}[section]
\newtheorem{lemma}[theorem]{Lemma}
\newtheorem{proposition}[theorem]{Proposition}
\newtheorem{corollary}[theorem]{Corollary}
\theoremstyle{definition}
\newtheorem{definition}[theorem]{Definition}
\newtheorem{example}[theorem]{Example}

\theoremstyle{remark}
\newtheorem{remark}[theorem]{Remark}
\numberwithin{equation}{section}

\setlength{\parindent}{0pt}

\usepackage{fullpage}

\begin{document}

\setcounter{page}{1}

\title[]{ $\eta$-metric structures}

\author{Ya\'e Ulrich Gaba}

\thanks{Electronic address: \texttt{gabayae2@gmail.com}}

\address{$^{1}$Institut de Math\'ematiques et de Sciences Physiques (IMSP)/UAC,
	Porto-Novo, B\'enin.}

\subjclass[2000]{Primary 54A05; Secondary 54E35, 54E35.}

\keywords{$\eta$-cone metric spaces, fixed points.}

\begin{abstract}
	In this paper, we discuss recent results about generalized metric spaces and fixed point theory. We introduce the notion of $\eta$-cone metric spaces, give some topological properties and prove some fixed point theorems for contractive type maps on these spaces. In particular we show that theses $\eta$-cone metric spaces are natural generalizations of both cone metric spaces and metric type spaces.
	\end{abstract}

\maketitle

\section{Introduction.}

Cone metric spaces were introduced in \cite{huang} and many fixed point results concerning
mappings in such spaces have been established. In \cite{khamsi}, M. A. Khamsi
connected this concept with a generalised form of metric space that he named \textit{metric type space} (MTS for short). Topological properties of metric type spaces and fixed point theorems for contractive mappings in metric type spaces  can extensively be read in \cite{cos,Gaba1,Gaba2,khamsi2}. The originality of the work by Huang\cite{huang} lies in the fact that they replace the real numbers by an ordered Banach space where the order on the underlying Banach space is defined via an associated cone subset. This work suggested different orientations in the generalization of the classical "metric spaces" (see for instance \cite{alex,li}).
The present manuscript investigates a similar extension by making a local assumption that will be clarified in the next lines. In fact, we replace the constant appearing in the classical "triangle inequality" of a MTS by a function of two variables. Then we study the topological properties of these new spaces and give some fixed point results. We conclude by showing that the new introduced $\eta$-cone metric spaces have a metric like structure.

\section{Basic definitions and preliminary results.}

First let us start by making some basic definitions.

\begin{definition}
	Let $E$ be a real Banach space with norm $\|.\|$ and $P$ be a subset of $E$. Then $P$ is called a cone if and only if 
	\begin{enumerate}
		\item $P$ is closed, nonempty and $P \neq \{\theta\}$, where $\theta $ is the zero vector in $E$;
		\item for any $a, b \geq 0$ (nonnegative real numbers), and $x, y \in P$, we have
		
		$ax + by \in P$;
		\item for $x \in P$, if $-x \in P$, then $x =  \theta$.
	\end{enumerate}
\end{definition}
Given a cone $P$ in a Banach space $E$, we define on $E$ a partial order $\preceq$ with respect to $P$ by
$$ x \preceq y \Longleftrightarrow y-x \in P. $$
We also write $x \prec y$ whenever $ x \preceq y$ and $x\neq y$, while $x\ll y$ will stand for $y-x \in Int(P)$ (where $Int(P)$ designates the interior of $P$).

The cone $P$ is said to be \textbf{normal} if there is a real number $C>0$, such that for all $x,y\in E$, we have
\[
\theta \preceq x \preceq y \Longrightarrow \|x\| \leq C \|y\|.
\]
The least positive number satisfying this inequality is called the \textbf{normal constant} of $P$. Therefore, we shall say that $P$ is a $K$-normal cone to indicate the fact that the normal constant is $K$.

The cone $P$ is said to be \textbf{regular} if every increasing sequence\footnote{Equivalently the
	cone $P$ is regular if and only if every decreasing sequence which is bounded from below
	is convergent.}
 which is bounded from above is convergent, i.e. if $(x_n)$ is a sequence such that $x_1\preceq x_2\preceq\cdots\preceq x_n\preceq \cdots \preceq y$ for some $y\in E$ then, there exists $x^* \in E$ such that $\underset{n\to \infty}{\lim} \| x_n-x^*\|=0$.

The cone $P$ is said to be \textbf{minihedral} cone if $\sup\{x, y\}$ exists for all $x, y \in E$,
and \textbf{strongly minihedral} if every subset of $E$ which is bounded from above has a
supremum and hence any subset of $E$ which is bounded from below has an infimum \cite{dei}. 

\vspace*{0.2cm}

Throughout this article we assume that the cone $P$ is normal with constant $K$ and $P$ is such that $int(P) \neq \emptyset$ and $\preceq$ is a partial ordering with respect to $P$. Hence the Banach space $E$ and the cone $P$ will be omitted and the Banach space $E$ will be assumed to be ordered with the order induced by the cone $P$.

\vspace*{0.2cm}

Now, we introduce a new type of generalized metric space, which we call $\eta$-cone metric spaces.

\begin{definition}
	Let $X$ be a non empty set and $\eta : X \times X \to [1, \infty)$ be a map. A function $d_\eta:X \times X \to E$ is called an \textbf{$\eta$-cone metric} on $X$ if:
	\begin{enumerate}
		\item[(d1)] $\theta \preceq d(x,y) \ \ \forall \ x \in X$ and $\ d_\eta(x,y)=\theta$ if and only if $x=y$; 
		\item[(d2)]$d_\eta(x,y)=d_\eta(y,x) \quad \forall\  x,y \in X$;
		\item[(d3)] $d_\eta(x,z) \preceq \eta(x,z)[d_\eta(x,y) + d_\eta(y,z)] \quad \forall\  x,y,z \in X $.
	\end{enumerate}
	The pair $(X,d_\eta)$ is called an \textbf{$\eta$-cone metric space}.
\end{definition}

\begin{remark} 
	If $\eta(x, y) = 1$ whenever $x,y\in X$, then we obtain the definition of a cone metric space (see \cite[Definition 1
]{huang}). For $\eta(x, y) = L(L\geq 1)$ whenever $x,y\in X$, then we obtain the definition of a cone metric type space (see \cite[Definition 2.1 ]{alex}).

In particular, when $E = \mathbb{R}$, $P = [0,+\infty)$ and $\eta(x,y)=C (C\geq 1)$ for all $x,y\in X$, then an $\eta$-cone metric space  reduces to a metric type space (MTS) (see \cite[Definiton 2.7]{khamsi}).
	
\end{remark}

\begin{example}\label{example1}
	Let $E = \mathbb{R}^2 , P = \{(x, y) \in E | x,y\geq  0\} \subseteq \mathbb{R}^2$ and $X=\{1,2,3\}.$ Let $\alpha \ge 0$ a constant and define $\eta : X \times X\to [0,\infty)$ and $d_\eta: X \times X \to E$
	as
	\[\eta(x,y)=1+x+y;\]
	\[d_\eta(1,1)=d_\eta(2,2)=d_\eta(3,3)=(0,0);\]
	
	\[ d_\eta(1,2)=d_\eta(2,1)=80(1,\alpha); d_\eta(1,3)=d_\eta(3,1)=1000(1,\alpha); d_\eta(2,3)=d_\eta(3,2)=600(1,\alpha).\]
	Then $(X, d_\eta )$ is an $\eta$-cone metric space.	
\end{example}

\begin{proof}
	(d1) and (d2) trivially hold. For (d3) we have:	
	
	\[d_\eta(1, 2) = 80(1,\alpha)\preceq \eta(1, 2)[d_\eta (1, 3) + d_\eta (3, 2)] = 4(1000 + 600)(1,\alpha) = 6400(1,\alpha),\]
	
	\[ d_\eta(1, 3) = 1000(1,\alpha)\preceq \eta(1, 3)[d_\eta (1, 2) + d_\eta(2, 3)] = 5(80 + 600)(1,\alpha) = 3400(1,\alpha).\]
	
	Similar calculations hold for $d_\eta(2, 3)$. Hence for all $x, y, z \in X,$
	
	\[d_\eta(x,z) \leq \eta(x,z)[d_\eta(x,y) + d_\eta(y,z)]. \]
	Hence $(X, d_\eta )$ is an $\eta$-cone metric space.

\end{proof}
	
	\begin{remark}
If we repeat Example \ref{example1} with $\eta(x,y)=1+\sin(x)+\sin(y)$, we also conclude that $(X, d_\eta )$ is an $\eta$-cone metric space. 
	\end{remark}

\begin{remark}
In Example \ref{example1}, note that $$ [d_\eta (1, 2) + d_\eta(2, 3)] = (80 + 600)(1,\alpha) \preceq  1000(1,\alpha)=d_\eta(1, 3) ,$$ i.e. the property (d3) holds only when we multiply by the factor $\eta(1,3)$. The property (d3) can be regarded as a "local" triangle inequality in the sense that, given, for any $x,y,z \in X,$ $d_\eta(x,y)$ and $d_\eta(x,y)+d_\eta(y,z)$, a "triangle inequality like" can always be achieved via a scalar function $\eta.
$
\end{remark}

\begin{example}
Let $X = C([a, b], \mathbb{R})$ be the space of all continuous real valued functions define on the interval $[a, b]$, $E=\mathbb{R}$ and $P=\mathbb{R}^+$. Then $(X, d_\eta )$ is an $\eta$-cone metric space with 

\[d_\eta(x,y) = \sup_{t\in [a,b]}|x(t)-y(t)|^2 \text{ and } \eta(x,y)= |x(t)| + |y(t)|+2.\]
\end{example}

The concepts of convergence, Cauchy sequence and completeness can easily be extended to the
case of an $\eta$-cone metric space.

\begin{definition}(Compare \cite{huang})
	Let $(x_n)$ be a sequence in an $\eta$-cone metric space $(X,d_\eta)$.
	\begin{itemize}
		\item[(a)]  $(x_n)$ is convergent to $x\in X$ and we denote $\underset{n\to \infty}{\lim}x_n=x$, if for every $c \in E$ with $c\gg \theta$, there exists $n_0 \in \mathbb{N}$ such that 
		$$ \forall \ n \geq n_0 \quad d_\eta(x_n,x )\ll c ;$$
		\item[(b)] $(x_n)$ is called Cauchy if for every $c \in E$ with $c\gg \theta$, there exists $n_0 \in \mathbb{N}$ such that 
		$$ \forall \ n,m \geq n_0 \quad d_\eta(x_n,x_m )\ll c ;$$
		
		\item[c)]  If every Cauchy sequence is convergent in $X$, then $X$ is called a complete
		$\eta$-cone metric space.
	\end{itemize}
\end{definition}

We state the following lemma without proof, as the proof is merely a copy of the proof of \cite[Lemma 1]{huang}:

\begin{lemma}\label{lemma1}
	Let $(X, d_\eta)$ be an $\eta$-cone metric space, $P$ be a normal cone with normal constant $K$.
	Let $\{x_n \}$ be a sequence in $X$. Then $\{x_n \}$ converges to $x$ if and only if $$d_\eta(x_n , x) \to \theta \ \ \text{ as } n \to \infty.$$
\end{lemma}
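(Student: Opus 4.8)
The plan is to mimic the proof of \cite[Lemma 1]{huang}, adapting it to the $\eta$-cone setting; the only new feature is the presence of the factor $\eta(x_n,x)\in[1,\infty)$, which turns out to be harmless because the normality estimate absorbs it. I would prove the two implications separately.

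\textbf{($\Rightarrow$)} Suppose $x_n\to x$ in the sense of the definition. Fix an arbitrary real $\varepsilon>0$. Since $\mathrm{int}(P)\neq\emptyset$, choose $c\in E$ with $c\gg\theta$ and $\|c\|$ so small that $K\|c\|<\varepsilon$ (any $c\gg\theta$ can be scaled down, since $\lambda c\gg\theta$ for $\lambda>0$). By convergence there is $n_0$ with $d_\eta(x_n,x)\ll c$ for all $n\ge n_0$; in particular $\theta\preceq d_\eta(x_n,x)\preceq c$. Normality of $P$ with constant $K$ then gives $\|d_\eta(x_n,x)\|\le K\|c\|<\varepsilon$ for all $n\ge n_0$. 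Hence $\|d_\eta(x_n,x)\|\to 0$, i.e. $d_\eta(x_n,x)\to\theta$.

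\textbf{($\Leftarrow$)} Suppose $\|d_\eta(x_n,x)\|\to 0$. Let $c\in E$ with $c\gg\theta$ be given. Since $c\in\mathrm{int}(P)$, there is $\delta>0$ such that the open ball $B(c,\delta)\subseteq P$; equivalently, every $y\in E$ with $\|y\|<\delta$ satisfies $c-y\in P$, i.e. $y\ll c$ when moreover we stay inside $\mathrm{int}$ — more carefully, $\|y-\theta\|<\delta$ implies $c-y\in\mathrm{int}(P)$, hence $y\ll c$. Pick $n_0$ with $\|d_\eta(x_n,x)\|<\delta$ for all $n\ge n_0$; then $d_\eta(x_n,x)\ll c$ for all such $n$, which is exactly convergence $x_n\to x$.

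I do not anticipate a genuine obstacle: the argument is the standard equivalence between order-convergence to $\theta$ and norm-convergence to $\theta$ in a normal cone, and the function $\eta$ never enters because Lemma~\ref{lemma1} only concerns the behaviour of the single quantity $d_\eta(x_n,x)$, not the triangle inequality (d3). The one point to state with a little care is the reduction ``given any $c\gg\theta$ we may assume $\|c\|$ as small as we like,'' which follows from the fact that $\mathrm{int}(P)$ is a cone (closed under multiplication by positive scalars); I would include one sentence justifying this. Everything else is a direct transcription of \cite[Lemma 1]{huang}.
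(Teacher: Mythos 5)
Your proof is correct and is exactly the argument the paper intends: the paper omits the proof, stating it is a copy of Huang's Lemma~1, and your two implications (normality to pass from $\theta\preceq d_\eta(x_n,x)\preceq c$ with $\|c\|$ small to norm-convergence, and openness of $\mathrm{int}(P)$ to pass back, which is precisely the content of Lemma~\ref{lemma11}) are that standard argument. Your observation that $\eta$ plays no role here is also accurate.
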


The next result, corollary of Lemma \ref{lemma1} above is a reformulation of \cite[Lemma 2]{huang}, as a uniform bound is required for the $\eta$-function to assure uniqueness for the limit of a convergent sequence in an $\eta$-cone metric space.
\begin{lemma}\label{lemma2}
	Let $(X, d_\eta)$ be an $\eta$-cone metric space such that $\eta$ is a bounded function, $P$ be a normal cone with normal constant $K$.
	Let $\{x_n \}$ be a sequence in $X$. If $\{x_n \}$ converges to $x$ and $\{x_n \}$ converges to $y$, then $x = y$.
	That is the limit of $\{x_n \}$ is unique.
\end{lemma}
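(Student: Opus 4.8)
The plan is to mimic the classical argument for uniqueness of limits in cone metric spaces, but carefully inserting the hypothesis that $\eta$ is bounded, say $\eta(x,y) \le M$ for all $x,y \in X$. First I would apply the triangle-like inequality (d3) with the three points $x$, $x_n$, $y$ to get
\[
d_\eta(x,y) \preceq \eta(x,y)\bigl[d_\eta(x,x_n) + d_\eta(x_n,y)\bigr] \preceq M\bigl[d_\eta(x,x_n) + d_\eta(x_n,y)\bigr],
\]
using symmetry (d2) to write $d_\eta(x,x_n) = d_\eta(x_n,x)$. Since $P$ is a normal cone with normal constant $K$, and $\theta \preceq d_\eta(x,y)$ by (d1), taking norms yields $\|d_\eta(x,y)\| \le KM\bigl(\|d_\eta(x_n,x)\| + \|d_\eta(x_n,y)\|\bigr)$.

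Next I would invoke Lemma \ref{lemma1}: since $\{x_n\}$ converges to $x$ and to $y$, we have $d_\eta(x_n,x) \to \theta$ and $d_\eta(x_n,y) \to \theta$ as $n \to \infty$, hence $\|d_\eta(x_n,x)\| \to 0$ and $\|d_\eta(x_n,y)\| \to 0$. Letting $n \to \infty$ in the norm inequality forces $\|d_\eta(x,y)\| = 0$, so $d_\eta(x,y) = \theta$. By (d1) this gives $x = y$, which is the desired conclusion.

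The only genuinely delicate point is making sure the boundedness of $\eta$ is actually used where it is needed: without a uniform bound $M$, the factor $\eta(x,y)$ appearing in the estimate is a fixed scalar (it depends only on the two limit points, not on $n$), so one might worry the hypothesis is superfluous. In fact for this particular two-point estimate $\eta(x,y)$ is already a constant, so the argument goes through even without global boundedness; the boundedness hypothesis is the conservative assumption the authors adopt (as flagged in the remark preceding the lemma) to keep the reformulation of \cite[Lemma 2]{huang} clean and to avoid subtleties in related arguments. I would therefore present the proof using the bound $M$ for uniformity with the statement, but note that only the single value $\eta(x,y)$ is invoked. No step here is a real obstacle; it is a routine normality-plus-triangle-inequality argument, and the writeup is essentially a transcription of Huang's proof with $\eta(x,y)$ (or its bound $M$) inserted as the multiplicative constant.
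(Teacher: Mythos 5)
Your proof is correct and follows essentially the same route as the paper's: apply (d3) through the intermediate point $x_n$, bound $\eta(x,y)$ by $M$, and use normality of $P$ to pass to norms and conclude $d_\eta(x,y)=\theta$ (the paper phrases the limit step via an arbitrary $c\gg\theta$ rather than via Lemma \ref{lemma1}, a cosmetic difference). Your side remark is also accurate: since $\eta(x,y)$ is evaluated only at the two fixed limit points, the global boundedness hypothesis is not actually needed for this particular lemma.
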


\begin{proof}
	For any $c \in E$ with $0 \ll c$, there is $N$ such that for all $n > N$ , $d_\eta(x_n , x)\preceq c$ and
	$d_\eta(x_n , y) \ll c$. We have
	
	\[d_\eta(x,y) \preceq \eta(x,y)[d_\eta(x , x_n)+d_\eta(x_n , y)] \preceq 2Mc ,\]
	
	where $\eta(x,y)\leq M$ whenever $x,y\in X.$ Hence 
	$\|d(x, y)\| \leq 2MK\|c\|.$ Since $c$ is arbitrary $d_\eta(x, y) = \theta$; therefore $x = y$.
\end{proof}

We carry on with the topological properties of $\eta$-cone metric spaces with the following lemma:

\begin{lemma}\label{lemma3}
Let $(X, d_\eta)$ be an $\eta$-cone metric space, $\{x_n \}$ be a sequence in $X$. If $\{x_n\}$ converges
to $x$ and $\sup_{n,m}\eta(x_n,x_m) \leq L$ for some positive constante $L$, then $\{x_n \}$ is a Cauchy sequence.
\end{lemma}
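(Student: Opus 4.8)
The plan is to mimic the classical argument that convergent sequences are Cauchy, carefully tracking where the function $\eta$ enters and using the hypothesis that it is bounded by $L$ on the relevant pairs together with the normality of the cone. First I would fix an arbitrary $c \in E$ with $c \gg \theta$. Since $\{x_n\}$ converges to $x$, Lemma \ref{lemma1} gives $d_\eta(x_n,x) \to \theta$ as $n \to \infty$, so for a suitably chosen small $c' \gg \theta$ there is $n_0$ with $d_\eta(x_n,x) \ll c'$ for all $n \ge n_0$. The key step is then to apply (d3) with the three points $x_n$, $x$, $x_m$:
\[
d_\eta(x_n,x_m) \preceq \eta(x_n,x_m)\bigl[d_\eta(x_n,x) + d_\eta(x,x_m)\bigr] \preceq L\bigl[d_\eta(x_n,x) + d_\eta(x_m,x)\bigr],
\]
using symmetry (d2) and the bound $\eta(x_n,x_m) \le L$ (note that the last inequality uses that $L \ge 1 > 0$, so multiplication preserves $\preceq$, and that $\eta(x_n,x_m)[\,\cdot\,] \preceq L[\,\cdot\,]$ since the bracket lies in $P$).

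Next I would convert this cone inequality into a norm inequality via normality: since $\theta \preceq d_\eta(x_n,x_m) \preceq L[d_\eta(x_n,x) + d_\eta(x_m,x)]$, the normal constant $K$ yields
\[
\|d_\eta(x_n,x_m)\| \le KL\bigl(\|d_\eta(x_n,x)\| + \|d_\eta(x_m,x)\|\bigr).
\]
Because $d_\eta(x_n,x) \to \theta$ in norm, the right-hand side tends to $0$ as $n,m \to \infty$, hence $\|d_\eta(x_n,x_m)\| \to 0$. To finish I would promote this back to the defining notion of Cauchy: given $c \gg \theta$, there is $\delta > 0$ with $\|b\| < \delta \Rightarrow b \ll c$ for all $b$ in a neighbourhood of $\theta$ argument — more cleanly, pick $c_1$ with $\theta \ll c_1$ and $\|c_1\|$ small, use the standard fact (as in the proof of Lemma \ref{lemma2}, where an estimate of the form $\|d_\eta(x,y)\| \le (\text{const})\|c\|$ together with arbitrariness of $c$ is exploited) that a sequence with $\|d_\eta(x_n,x_m)\| \to 0$ satisfies $d_\eta(x_n,x_m) \ll c$ eventually. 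Concretely: choose $n_0$ so that $\|d_\eta(x_n,x_m)\| < \|c\|/(KL)$... rather, choose $n_0$ so that $d_\eta(x_n,x) \ll c/(2L)$ for $n \ge n_0$ (possible since $c/(2L) \gg \theta$ and $d_\eta(x_n,x)\to\theta$, invoking that $b \ll c$ holds once $b$ is small enough relative to an interior point — this is the content of Lemma \ref{lemma1}'s use in convergence), so that $d_\eta(x_n,x_m) \preceq L[d_\eta(x_n,x)+d_\eta(x_m,x)] \ll L[c/(2L) + c/(2L)] = c$.

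The main obstacle I anticipate is the last step — passing from $\|d_\eta(x_n,x_m)\| \to 0$ (or from $d_\eta(x_n,x) \ll$ small interior elements) back to $d_\eta(x_n,x_m) \ll c$ for the given arbitrary $c$. In a general normed ordered Banach space one must be slightly careful that "small norm" implies "$\ll c$"; the clean route is to avoid norms in the final step altogether and argue purely order-theoretically: since $c \gg \theta$ we have $\frac{c}{2L} \gg \theta$, convergence of $\{x_n\}$ to $x$ directly provides $n_0$ with $d_\eta(x_n,x) \ll \frac{c}{2L}$ for all $n \ge n_0$, and then the chain displayed above gives $d_\eta(x_n,x_m) \ll c$ for all $n,m \ge n_0$, which is exactly the definition of Cauchy. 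This keeps the proof short and sidesteps the normality-to-topology translation, though one may also remark (as the authors do elsewhere) that normality makes both formulations equivalent.
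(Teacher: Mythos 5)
Your final, order-theoretic argument is exactly the paper's proof: choose $n_0$ so that $d_\eta(x_n,x)\ll \frac{1}{2L}c$ for $n\ge n_0$, then apply (d3) with the bound $\eta(x_n,x_m)\le L$ to get $d_\eta(x_n,x_m)\ll c$. The preliminary detour through normality and norm estimates is unnecessary (as you yourself note), but the proof you settle on is correct and essentially identical to the one in the paper.
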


\begin{proof}
	For any $c \in E$ with $0 \ll c$, there is $N$ such that for all $n, m > N$ , $d_\eta(x_n , x) \ll \frac{1}{2L}c$
	and $d_\eta(x_m , x) \ll \frac{1}{2L}c$. Hence
	 $$d_\eta(x_n , x_m )\preceq \eta(x_n,x_m)[d_\eta(x_n , x) + d(x_m , x)]\preceq c.$$ Therefore $\{x_n\}$ is a Cauchy sequence.
\end{proof}

A characterisation of Cauchy sequences is given by the following
\begin{lemma}\label{lemma4}
$(X, d_\eta)$ be an $\eta$-cone metric space, $P$ be a normal cone with normal constant $K$.
	Let $\{x_n\}$ be a sequence in $X$. Then $\{x_n\}$ is a Cauchy sequence if and only if $$d_\eta(x_n , x_m ) \to \theta\ \ (n, m \to \infty).$$
	
\end{lemma}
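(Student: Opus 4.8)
The plan is to mimic the classical proof of the analogous characterisation for cone metric spaces (\cite[Lemma 4]{huang}), using Lemma~\ref{lemma1} as the template and paying attention to where the $\eta$-factor intervenes. The statement is an equivalence, so I would prove the two implications separately. The easy direction is: if $\{x_n\}$ is Cauchy in the sense of the $c$-definition, then $d_\eta(x_n,x_m)\to\theta$ in norm. Here, given $\varepsilon>0$, one picks $c\in E$ with $\theta\ll c$ and $K\|c\|<\varepsilon$ (such $c$ exists since $\mathrm{int}(P)\neq\emptyset$, e.g. $c=\tfrac{\varepsilon}{2K\|c_0\|}c_0$ for any fixed $c_0\gg\theta$). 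By the Cauchy hypothesis there is $n_0$ with $d_\eta(x_n,x_m)\ll c$ for all $n,m\geq n_0$; since $\theta\preceq d_\eta(x_n,x_m)\preceq c$ and $P$ is $K$-normal, $\|d_\eta(x_n,x_m)\|\leq K\|c\|<\varepsilon$. Hence $d_\eta(x_n,x_m)\to\theta$.

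For the converse, suppose $\|d_\eta(x_n,x_m)\|\to 0$ as $n,m\to\infty$. Fix any $c\in E$ with $\theta\ll c$. Since $\mathrm{int}(P)\neq\emptyset$, there is $\delta>0$ such that the open ball $B(c,\delta)\subseteq P$; equivalently, $\|y\|<\delta$ implies $c-y\in\mathrm{int}(P)$, i.e. $y\ll c$ (I would state this as the standard observation that for $c\gg\theta$ and any $y$ with $\|y\|<\delta$ one has $c-y\gg\theta$, hence $y\ll c$). By the hypothesis there is $n_0$ such that $\|d_\eta(x_n,x_m)\|<\delta$ for all $n,m\geq n_0$, and therefore $d_\eta(x_n,x_m)\ll c$ for all $n,m\geq n_0$. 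Since $c\gg\theta$ was arbitrary, $\{x_n\}$ is a Cauchy sequence in the $\eta$-cone metric sense. I would then note that this direction uses only that $\mathrm{int}(P)\neq\emptyset$, while the first direction uses normality.

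I do not expect any serious obstacle: the $\eta$-function does not actually enter this particular argument, because Cauchyness is phrased purely in terms of $d_\eta(x_n,x_m)$ and no triangle inequality (hence no $\eta$-factor) is invoked — this is in contrast with Lemma~\ref{lemma3}, where (d3) is used and the uniform bound $L$ on $\eta$ is needed. The only mild subtlety worth spelling out is the topological fact relating norm-smallness to the relation $\ll$, i.e. that for $c\gg\theta$ there is $\delta>0$ with $\|y\|<\delta\Rightarrow y\ll c$; this is immediate from $c\in\mathrm{int}(P)$ and is exactly the lemma used (often implicitly) in \cite{huang}. I would therefore present the proof in two short paragraphs, one per implication, and remark that it is formally identical to the proof of \cite[Lemma 4]{huang} since the defining inequality (d3) plays no role.
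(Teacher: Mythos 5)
Your proof is correct and is essentially the paper's own approach: the paper gives no written argument here but simply defers to the proof of Lemma~4 in \cite{huang}, and what you have written out is precisely that classical two-implication argument (normality for the forward direction, the interior-point observation of Lemma~\ref{lemma11} for the converse), with the accurate remark that $\eta$ plays no role since (d3) is never invoked.
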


\begin{proof}
See the proof of \cite[Lemma 4]{huang}.
\end{proof}

%%%%%%%%%%%%%%%%%%%%%%%%%%%%%%%%%%%%%%%%%%%%%%%%%%%%%%%%%%%

%We begin by stating the following lemma. 
%which is an analogue of \cite[Lemma 2]{turkoglu}.
We now state the two following lemmas, which proofs are copies of the proofs of \cite[Lemma  2.15]{Gabaq}, \cite[Lemma 2.16 ]{Gabaq}.

\begin{lemma}\label{lemma11}
	Let $(X,d_\eta)$ be an $\eta$-cone metric space. For each $c\in E$ with $ \ c\gg \theta$, there exists $\sigma>0$ such that $x\ll c$ whenever $\|x\|<\sigma, \ x\in E$.
\end{lemma}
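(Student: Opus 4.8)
The statement to prove, Lemma \ref{lemma11}, is a standard fact about cones with nonempty interior in a normed space: points of small norm lie in the ``shadow'' of any fixed interior point $c$. The plan is to exploit the openness of $\mathrm{Int}(P)$ directly. Since $c \gg \theta$ means $c \in \mathrm{Int}(P)$, there is an open ball $B(c,\sigma) = \{y \in E : \|y-c\| < \sigma\}$ contained entirely in $P$. I would then argue that if $\|x\| < \sigma$, the point $c - x$ satisfies $\|(c-x) - c\| = \|x\| < \sigma$, so $c - x \in B(c,\sigma) \subseteq P$, which is precisely the statement that $x \preceq c$.

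The only subtlety is that the conclusion is stated as $x \ll c$, i.e. $c - x \in \mathrm{Int}(P)$, not merely $c - x \in P$. To get the strict inequality I would shrink the radius: pick $\sigma$ with $B(c,\sigma) \subseteq P$ as above, and then show that $\|x\| < \sigma$ actually forces $c - x \in \mathrm{Int}(P)$. Indeed, if $\|x\| < \sigma$, choose $\varepsilon > 0$ small enough that $\|x\| + \varepsilon < \sigma$; then for every $z$ with $\|z - (c-x)\| < \varepsilon$ we have $\|z - c\| \le \|z - (c-x)\| + \|x\| < \varepsilon + \|x\| < \sigma$, so $z \in B(c,\sigma) \subseteq P$. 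Hence the whole ball $B(c-x,\varepsilon)$ lies in $P$, which means $c - x \in \mathrm{Int}(P)$, i.e. $x \ll c$. Alternatively, one can simply apply the ``non-strict'' version with radius $\sigma/2$: $\|x\| < \sigma/2$ gives $c - x \in \mathrm{Int}(P)$ because $c-x$ is an interior point of the ball $B(c,\sigma/2) \subseteq \overline{B}(c,\sigma/2) \subseteq B(c,\sigma) \subseteq P$ — but the cleaner route is the $\varepsilon$-argument above, then rename. Either way, the $\sigma$ produced is whatever radius witnesses $c \in \mathrm{Int}(P)$ (possibly halved), which is strictly positive precisely because $\mathrm{Int}(P) \neq \emptyset$ and is open.

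I do not anticipate a genuine obstacle here; the proof is essentially a one-line topological observation once one unwinds the definitions of $\gg$ and $\ll$ in terms of $\mathrm{Int}(P)$. The only thing to be careful about is not conflating ``$c-x \in P$'' with ``$c - x \in \mathrm{Int}(P)$'' — the lemma genuinely asks for the latter, so the write-up must make the shrinking-radius step explicit rather than stopping at membership in $P$. No properties of the $\eta$-cone metric $d_\eta$ are used at all; the lemma is purely about the Banach space $E$ and its cone $P$, and the hypothesis $\mathrm{Int}(P) \neq \emptyset$ (in force throughout the paper) is exactly what makes $\sigma$ exist.
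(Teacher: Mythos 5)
Your proof is correct and is essentially the standard argument that the paper itself only cites (it defers to \cite[Lemma 2.15]{Gabaq} rather than writing it out): take a ball $B(c,\sigma)\subseteq P$ witnessing $c\in\mathrm{Int}(P)$ and observe that $\|x\|<\sigma$ places $c-x$ in that ball. One small simplification: your $\varepsilon$-shrinking step is unnecessary, since an open ball contained in $P$ is automatically contained in $\mathrm{Int}(P)$ (the interior being the largest open subset of $P$), so $c-x\in B(c,\sigma)\subseteq\mathrm{Int}(P)$ follows at once.
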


\begin{lemma}\label{lemma22}
	Let $(X,d_\eta)$ be an $\eta$-cone metric space over a cone $P$. Then for each $c_1, c_2 \in Int(P)$, there exists $c\in Int(P)$ such that $c_1-c \in Int(P)$ and $c_2-c \in Int(P)$. 
\end{lemma}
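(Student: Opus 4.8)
The plan is to work entirely in the ambient Banach space $E$, since the statement concerns only the cone $P$ and uses nothing about the $\eta$-cone metric beyond the fact that its values lie in $E$. First I would record the two elementary facts about $P$ that the argument needs: (i) $Int(P)$ is open in $E$, so each of its points is the centre of an open ball contained in $Int(P)$; and (ii) $Int(P)$ is invariant under multiplication by a positive scalar, which holds because $P$ is a cone (hence $tP\subseteq P$ for every $t\ge 0$) and, for $t>0$, the set $t\cdot Int(P)$ is open and contained in $P$, so $t\cdot Int(P)\subseteq Int(P)$.

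Next, using (i), choose $\delta_1,\delta_2>0$ with $B(c_1,\delta_1)\subseteq Int(P)$ and $B(c_2,\delta_2)\subseteq Int(P)$. Pick a scalar $t>0$ small enough that $t\|c_1\|<\min\{\delta_1,\delta_2\}$, and set $c:=t\,c_1$. I would then verify the three required memberships. By (ii), $c=t\,c_1\in Int(P)$. For the remaining two, note $\|c_1-(c_1-c)\|=\|c\|=t\|c_1\|<\delta_1$, so $c_1-c\in B(c_1,\delta_1)\subseteq Int(P)$; and likewise $\|c_2-(c_2-c)\|=\|c\|=t\|c_1\|<\delta_2$, so $c_2-c\in B(c_2,\delta_2)\subseteq Int(P)$. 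This proves the claim.

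There is no genuine obstacle here; the only point that deserves a sentence of justification is fact (ii), namely that a positive scalar multiple of an interior point of the cone is again an interior point. Everything else reduces to the observation that moving from a fixed interior point $c_i$ by a vector of sufficiently small norm keeps one inside a preselected ball about $c_i$. An alternative choice would be $c:=t(c_1+c_2)$, relying on $c_1+c_2\in Int(P)$ (which follows from $Int(P)+P\subseteq Int(P)$); I would present the shorter variant, which is the $c=t\,c_1$ argument above.
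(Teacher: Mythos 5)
Your argument is correct and complete: the choice $c = t\,c_1$ with $t\|c_1\|<\min\{\delta_1,\delta_2\}$ does the job, and you rightly isolate the one point needing justification, namely that $t\cdot Int(P)\subseteq Int(P)$ for $t>0$. The paper itself does not reproduce a proof (it defers to Lemma 2.16 of the cited reference \cite{Gabaq}), but the standard argument there is exactly this one --- shrink a suitable interior point until it fits inside open balls about $c_1$ and $c_2$ contained in $P$ --- so your proposal matches the intended proof while being self-contained.
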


Now, we state the following lemmas, which proof is merely a copy of the proof of
 \cite[Theorem 2.17]{Gabaq}.

\begin{proposition}
	Every  $\eta$-cone metric space is a topological space.
\end{proposition}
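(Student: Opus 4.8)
The plan is to exhibit an explicit topology on an $\eta$-cone metric space $(X,d_\eta)$ by specifying a basis of open balls and checking the basis axioms. First I would define, for each $x\in X$ and each $c\in\mathrm{Int}(P)$, the ball
\[
B(x,c)=\{y\in X : d_\eta(x,y)\ll c\},
\]
and let $\mathcal{B}=\{B(x,c): x\in X,\ c\in\mathrm{Int}(P)\}$. The claim is that $\mathcal{B}$ is a basis for a topology $\tau$ on $X$; we then declare $U\in\tau$ iff for every $x\in U$ there is $c\in\mathrm{Int}(P)$ with $B(x,c)\subseteq U$, which is automatically a topology once $\mathcal{B}$ is shown to be a basis.

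The key steps are as follows. Step one: every point lies in some basis element — indeed $x\in B(x,c)$ for any $c\in\mathrm{Int}(P)$ since $d_\eta(x,x)=\theta\ll c$ by (d1), and $\mathrm{Int}(P)\neq\emptyset$ by the standing assumption on $P$. Step two: given $x\in B(x_1,c_1)\cap B(x_2,c_2)$, I must produce a basis element $B(x,c)$ with $x\in B(x,c)\subseteq B(x_1,c_1)\cap B(x_2,c_2)$. The idea is to set $c_i' = c_i - d_\eta(x,x_i)$, which lies in $\mathrm{Int}(P)$ precisely because $d_\eta(x,x_i)\ll c_i$; then apply Lemma \ref{lemma22} to $c_1',c_2'$ to get a single $c'\in\mathrm{Int}(P)$ with $c_i'-c'\in\mathrm{Int}(P)$. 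Finally I divide the triangle-inequality slack by the $\eta$-factor: take $c = \dfrac{1}{\eta(x,x_i)}\,c'$, or more carefully a $c\in\mathrm{Int}(P)$ small enough that $\eta(x,x_i)c \ll c_i'$ for $i=1,2$ simultaneously — this is possible by Lemma \ref{lemma11} (choose $\|c\|$ small) together with Lemma \ref{lemma22} to handle both indices at once. Step three: verify the containment. If $y\in B(x,c)$ then $d_\eta(x,y)\ll c$, so by (d3),
\[
d_\eta(x_i,y)\preceq \eta(x_i,y)\big[d_\eta(x_i,x)+d_\eta(x,y)\big]\prec \eta(x_i,x)\big[d_\eta(x_i,x)+c\big]\preceq \eta(x_i,x)d_\eta(x_i,x)+c_i' \ll c_i,
\]
using symmetry (d2) and the choice of $c$; hence $y\in B(x_i,c_i)$ for $i=1,2$.

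The main obstacle I anticipate is the bookkeeping around the $\eta$-factor: unlike the ordinary cone-metric case, the triangle inequality (d3) inflates the estimate by $\eta(x_i,y)$ or $\eta(x_i,x)$, and one must be careful that this coefficient is controlled — it depends on the points involved, not on a global constant, so the argument must fix $x,x_1,x_2$ first and only then choose $c$ depending on the finitely many values $\eta(x,x_1),\eta(x,x_2)$. Combining the two constraints (one per index $i$) into a single admissible $c\in\mathrm{Int}(P)$ is exactly what Lemma \ref{lemma22} is designed for, and passing between the order condition $\eta(x,x_i)c\ll c_i'$ and a norm condition $\|c\|<\sigma_i$ is what Lemma \ref{lemma11} supplies; once these two lemmas are invoked the remaining manipulations with $\preceq$, $\ll$ and scalar multiplication by positive reals are routine. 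I would close by remarking that $\tau$ as defined is the topology generated by $\mathcal{B}$, so $(X,\tau)$ is a topological space, completing the proof.
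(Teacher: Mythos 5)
There is a genuine gap in your Steps two and three. You set out to prove that the balls $B(x,c)$ form a basis in the classical sense, i.e.\ that every point $y$ of a small ball $B(x,c)$ centred at a point $x$ of $B(x_1,c_1)\cap B(x_2,c_2)$ lands back in each $B(x_i,c_i)$. To do this you must estimate $d_\eta(x_i,y)$ for a \emph{variable} point $y$, and the coefficient that (d3) produces is $\eta(x_i,y)$ --- not $\eta(x_i,x)$. Since $\eta$ is an arbitrary function into $[1,\infty)$, with no boundedness, continuity or monotonicity assumed, $\eta(x_i,y)$ cannot be controlled by fixing $x,x_1,x_2$ in advance; your displayed step $\eta(x_i,y)[d_\eta(x_i,x)+d_\eta(x,y)]\prec\eta(x_i,x)[d_\eta(x_i,x)+c]$ is therefore unjustified, and your closing remark that only the finitely many values $\eta(x,x_1),\eta(x,x_2)$ matter is exactly where the argument breaks. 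A second, independent problem survives even if $\eta$ were bounded: your target $c_i'=c_i-d_\eta(x,x_i)$ is calibrated for the ordinary triangle inequality, but (d3) inflates the fixed term to $\eta(x_i,y)\,d_\eta(x_i,x)$, and the final comparison $\eta(x_i,x)d_\eta(x_i,x)+c_i'\ll c_i$ amounts to $\eta(x_i,x)d_\eta(x_i,x)\ll d_\eta(x_i,x)$, which is impossible for $\eta\geq 1$ and $d_\eta(x_i,x)\neq\theta$ since $-(\eta(x_i,x)-1)d_\eta(x_i,x)\in -P$ cannot lie in $Int(P)$. This is the familiar phenomenon from $b$-metric and metric type spaces: open balls need not be open sets, so they need not form a basis.

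The paper's proof avoids the issue entirely by never comparing balls with different centres. It \emph{defines} $\mathcal{T}_\eta$ as the collection of sets $U$ such that every $x\in U$ admits some $B_\eta(x,c)\subset U$, and verifies the three topology axioms directly: for $U\cap V$ the two balls to be reconciled are both centred at the same point $x$, so Lemma \ref{lemma22} alone yields a common $c$ with $B_\eta(x,c)\subset B_\eta(x,c_1)\cap B_\eta(x,c_2)$, and (d3) --- hence the $\eta$-factor --- is never invoked. To salvage your route you would need an extra hypothesis controlling $\eta$ (and even then a more careful choice of $c$), whereas the statement as given only requires the paper's weaker, centre-preserving formulation.
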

\begin{proof}
	For $c\gg \theta$, and $x\in (X,d_\eta)$ let $$B_\eta(x,c)=\{ y\in X: d_\eta(x,y) \ll c \}$$
	and 
	\[
	\mathcal{B} = \{ B_\eta(x,c): x\in X, c\gg \theta\}.
	\]
	Then the collection
	\[
	\mathcal{T}_\eta=\{U\subset X: \forall x\in U, \exists c\gg \theta, B_{d_\eta}(x,c)\subset U \}
	\]
	is a topology on $X$. Indeed,
	\begin{enumerate}
		\item $\emptyset$ and $X$ belong to $\mathcal{T}$,
		\item let $U,V \in \mathcal{T}$ and let $x\in U\cap V$. Then there exist $c_1\gg \theta$ and $c_2\gg \theta$ such that $B_{d_\eta}(x,c_1)\subset U$ and $B(x,c_2)\subset V$. %\noindent
		By Lemma \ref{lemma22}, there exists $c\in Int(P)$ such that $c_1-c \in Int(P)$ and $c_2-c \in Int(P)$. Then, it is clear that $x\in B_{d_\eta}(x,c)\subset U\cap V$, hence $U\cap V \in \mathcal{T}$.
		\item let $(U_\alpha)_\alpha$ be a family of sets from $\mathcal{T}$. We consider $x\in \underset{\alpha}{\cup}\ U_\alpha$. There exists $\alpha_{{}_0}$ such that $x\in U_{\alpha_{{}_0}}$. Hence, find $c\gg \theta$ such that $$x\in B_q(x,c)\subset U_{\alpha_{{}_0}}\subset \underset{\alpha}{\bigcup} \ U_\alpha,$$ that is $\underset{\alpha}{\cup}\ U_\alpha \in \mathcal{T}$.
		
		%\noindent
\end{enumerate}
This completes the proof.
\end{proof}

\begin{definition}
	
	Let $(X,d_\eta)$ be an $\eta$-cone metric space, the collection

	\[
	\mathcal{T}_\eta=\{U\subset X: \forall x\in U, \exists c\gg \theta, B_\eta(x,c)\subset U \}
	\]
	will be called the natural topology on $(X,d_\eta)$.
\end{definition}

Before we mention our next result, we recall the present one:

\begin{lemma}(Compare \cite[Lemma 1.4]{Gabaq})\label{yes}
	Let $(X,d_\eta)$ be an $\eta$-cone metric space. Then we  have:
	
	\begin{itemize}
		\item[a)] $Int(P)+Int(P) \subset Int(P)$ and $\lambda Int(P) \subset Int(P)$ for any positive real number $\lambda$.
		
		\item[b)] For any given $c\gg \theta $ and $c_0 \gg \theta$, there exists $n_0 \in \mathbb{N}$ such that $\frac{1}{n_0}c_0\ll c$.
		
		\item[c)] If $(a_n)$ and $(b_n)$ are sequences in $E$ such that $a_n\to a$, $b_n\to b$ and $a_n\preceq b_n$ for all $n\geq 1$, then $a\preceq b$.
	\end{itemize}
\end{lemma}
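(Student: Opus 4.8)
The plan is to handle the three items independently, reducing each to an elementary property of the cone $P$ --- namely that $P$ is closed, stable under addition, and stable under multiplication by positive reals (axioms (1) and (2) of a cone) --- together with the continuity of the vector operations on the normed space $E$.

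\textbf{Part a).} First I would record the two set inclusions $P+P\subseteq P$ and $\lambda P\subseteq P$ for every real $\lambda>0$, both immediate from cone axiom (2). Now let $x\in Int(P)$ and choose $r>0$ with $x+B(\theta,r)\subseteq P$, where $B(\theta,r)$ denotes the open norm-ball of radius $r$ centered at $\theta$. For any $p\in P$ one has $(x+p)+B(\theta,r)=\bigl(x+B(\theta,r)\bigr)+p\subseteq P+p\subseteq P$, so $x+p\in Int(P)$; in particular, choosing $p\in Int(P)\subseteq P$ gives $Int(P)+Int(P)\subseteq Int(P)$. For the scalar statement, if $\lambda>0$ then $\lambda x+B(\theta,\lambda r)=\lambda\bigl(x+B(\theta,r)\bigr)\subseteq \lambda P\subseteq P$, whence $\lambda x\in Int(P)$.

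\textbf{Part b).} Here I would simply invoke Lemma \ref{lemma11}: given $c\gg\theta$ there is $\sigma>0$ such that $\|u\|<\sigma$ implies $u\ll c$. Since $\bigl\|\tfrac{1}{n}c_0\bigr\|=\tfrac{1}{n}\|c_0\|\to 0$, picking any $n_0\in\mathbb{N}$ with $n_0>\|c_0\|/\sigma$ makes $\bigl\|\tfrac{1}{n_0}c_0\bigr\|<\sigma$, so $\tfrac{1}{n_0}c_0\ll c$.

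\textbf{Part c).} From $a_n\preceq b_n$ we have $b_n-a_n\in P$ for every $n$. Since $a_n\to a$ and $b_n\to b$ in norm, subtraction being continuous gives $b_n-a_n\to b-a$; as $P$ is closed (axiom (1)), the limit $b-a$ lies in $P$, i.e. $a\preceq b$.

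The only step requiring a little care is part a), where one must translate (respectively dilate) the \emph{entire} open ball witnessing that $x$ is an interior point, rather than arguing pointwise; parts b) and c) are direct applications of Lemma \ref{lemma11} and of the closedness of $P$, respectively, so I foresee no real obstacle.
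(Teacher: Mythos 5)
The paper does not actually prove this lemma: it merely ``recalls'' it, deferring to \cite[Lemma 1.4]{Gabaq}, so there is no in-text argument to compare against. Your three arguments are correct and are the standard ones for these facts --- in a) you rightly translate/dilate the whole witnessing ball rather than arguing pointwise, in b) you legitimately invoke Lemma \ref{lemma11} (which the paper states earlier), and in c) closedness of $P$ plus continuity of subtraction does the job --- so your proposal supplies a complete proof where the paper supplies none.
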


\begin{lemma}
	Every $\eta$-cone metric space $(X,d_\eta)$ is first countable.
\end{lemma}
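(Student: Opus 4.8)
The plan is to exhibit, for each point $x \in X$, a countable local base at $x$ in the natural topology $\mathcal{T}_\eta$. The natural candidate is the family $\{B_\eta(x, \frac{1}{n}c_0) : n \in \mathbb{N}\}$ for a single fixed $c_0 \gg \theta$ (such $c_0$ exists since $Int(P) \neq \emptyset$ by our standing assumption). I would first verify that each $B_\eta(x, \frac{1}{n}c_0)$ is indeed an open set, i.e. belongs to $\mathcal{T}_\eta$; this is the standard "open balls are open" argument, which uses property (d3) together with the fact that $\eta$ takes values in $[1,\infty)$ — given $y \in B_\eta(x,c)$, one picks $c'$ with $d_\eta(x,y) + \eta(x,y)$-scaled slack still below $c$, invoking Lemma \ref{lemma22} and part (a) of Lemma \ref{yes} to absorb the $\eta$ factor. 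Since this is the same computation as in the cited \cite[Theorem 2.17]{Gabaq}, I would state it briefly rather than grind through it.

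Next I would show this countable family is a base at $x$: given any open $U \ni x$, by definition of $\mathcal{T}_\eta$ there is some $c \gg \theta$ with $B_\eta(x,c) \subset U$, and by part (b) of Lemma \ref{yes} there is $n_0$ with $\frac{1}{n_0}c_0 \ll c$. Then $B_\eta(x, \frac{1}{n_0}c_0) \subset B_\eta(x,c) \subset U$: indeed if $d_\eta(x,y) \ll \frac{1}{n_0}c_0$ then $d_\eta(x,y) \ll c$ because $c - d_\eta(x,y) = (c - \frac{1}{n_0}c_0) + (\frac{1}{n_0}c_0 - d_\eta(x,y)) \in Int(P) + Int(P) \subset Int(P)$, again by Lemma \ref{yes}(a). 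This gives a countable neighbourhood base at each point, which is precisely first countability.

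The main obstacle — or rather the only subtlety — is making sure the $\eta$ factor does not obstruct openness of the balls. Unlike in an ordinary cone metric space, the triangle-type inequality (d3) carries the multiplicative weight $\eta(x,z)$, so when we try to fit a small ball around an interior point $y$ of $B_\eta(x,c)$ inside $B_\eta(x,c)$, the radius must be shrunk by a factor involving $\eta(x,y)$. Because $\eta \geq 1$ everywhere, $\frac{1}{\eta(x,y)} \leq 1$ is a well-defined positive scalar, and $\frac{1}{\eta(x,y)} Int(P) \subset Int(P)$ by Lemma \ref{yes}(a), so the argument goes through pointwise; no global bound on $\eta$ is needed here (in contrast to Lemma \ref{lemma2}). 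I would remark explicitly that first countability holds without any boundedness hypothesis on $\eta$, since this is the cleanest way to see why only Lemma \ref{yes} is invoked and not, say, Lemma \ref{lemma3}.
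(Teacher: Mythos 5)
Your proof is correct and follows essentially the same route as the paper: fix $c \gg \theta$, take the countable family $B_\eta\left(x,\frac{1}{n}c\right)$, and use Lemma \ref{yes}(b) to fit $\frac{1}{n_0}c$ inside any given $c_1$. Your additional verification that the balls are open (and hence genuine neighbourhoods) is a detail the paper's proof silently omits, so it is a welcome refinement rather than a divergence.
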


%\noindent
\begin{proof}
	Let $p\in X$. Fix $c\in Int(P)$. We show that $$\mathcal{B}_p = \left\lbrace B_\eta\left(p,\frac{1}{n}c\right): n\in \mathbb{N}\right\rbrace$$ is a local base at $p$. Let $U$ be an open set containing $p$. There exists $c_1 \in Int(P)$ such that $B_{d_\eta}(p,c_1)\subset U$. We know by Lemma \ref{yes} that we can find $n_0\in \mathbb{N}$ such that $\frac{c}{n_0}\ll c_1$. Hence $B_{d_\eta}(p,\frac{c}{n_0})\subset B_{d_\eta}(p,c_1) $. This completes the proof.
	
\end{proof}

%%%%%%%%%%%%%%%%%%%%%%%%%%%%%%%%%%%%%%%%%%%%%%%%%%%

\begin{definition}
	A map $T : (X_1,d_{\eta_{{}_1}}) \to (X_2,d_{\eta_{{}_2}})$ between $\eta$-cone metric spaces is called continuous at $x \in X_1$, if each $V \in \mathcal{T}_{\eta_{{}_2}}$ containing $Tx$, there exists $U \in \mathcal{T}_{\eta_{{}_1}} $ containing $x$ such that $T (U ) \subset V$. If $T$ is continuous at each $x \in X_1$ then it is called continuous.
\end{definition}

\begin{definition}
	 A map $T : (X_1,d_{\eta_{{}_1}}) \to (X_2,d_{\eta_{{}_2}})$ between $\eta$-cone metric spaces is continuous is called
	sequentially continuous if $\{x_n\} \subset X, x_n \to x$ implies $T x_n \to T x$.
\end{definition}

\begin{proposition}
	A map $T : (X_1,d_{\eta_{{}_1}}) \to (X_2,d_{\eta_{{}_2}})$ between $\eta$-cone metric spaces is continuous if and only if T is sequentially continuous.
\end{proposition}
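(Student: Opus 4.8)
The plan is to prove the two implications separately, relying on the first-countability of $\eta$-cone metric spaces (established just above) together with Lemma~\ref{lemma1}, which translates topological convergence into norm convergence $d_\eta(x_n,x)\to\theta$.

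First I would handle the easy direction: \emph{continuity implies sequential continuity}. Suppose $T$ is continuous at $x$ and let $x_n\to x$ in $X_1$. Given $c\gg\theta$ in $E_2$, the ball $B_{\eta_2}(Tx,c)$ is an open neighbourhood of $Tx$ (one checks it lies in $\mathcal{T}_{\eta_2}$, or simply invokes that it contains an open set around $Tx$). By continuity there is $U\in\mathcal{T}_{\eta_1}$ with $x\in U$ and $T(U)\subset B_{\eta_2}(Tx,c)$. Since $U$ is open and contains $x$, there is $c_1\gg\theta$ with $B_{\eta_1}(x,c_1)\subset U$; since $x_n\to x$, there is $n_0$ with $d_{\eta_1}(x_n,x)\ll c_1$ for all $n\ge n_0$, i.e.\ $x_n\in B_{\eta_1}(x,c_1)\subset U$ for $n\ge n_0$. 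Hence $Tx_n\in B_{\eta_2}(Tx,c)$, i.e.\ $d_{\eta_2}(Tx_n,Tx)\ll c$, for all $n\ge n_0$. As $c\gg\theta$ was arbitrary, $Tx_n\to Tx$.

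For the converse, \emph{sequential continuity implies continuity}, I would argue by contraposition. Suppose $T$ is not continuous at some $x\in X_1$: there is $V\in\mathcal{T}_{\eta_2}$ with $Tx\in V$ such that no $U\in\mathcal{T}_{\eta_1}$ with $x\in U$ satisfies $T(U)\subset V$. Using first countability, fix $c\in Int(P_1)$ and the local base $\mathcal{B}_x=\{B_{\eta_1}(x,\tfrac1n c):n\in\mathbb{N}\}$ at $x$ from the preceding lemma. For each $n$, since $B_{\eta_1}(x,\tfrac1n c)$ is an open neighbourhood of $x$ but is not mapped into $V$, pick $x_n\in B_{\eta_1}(x,\tfrac1n c)$ with $Tx_n\notin V$. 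Then $d_{\eta_1}(x_n,x)\ll \tfrac1n c$, and by Lemma~\ref{yes}(b) together with Lemma~\ref{lemma1} one gets $d_{\eta_1}(x_n,x)\to\theta$, so $x_n\to x$; yet $Tx_n\notin V$ for all $n$, so $Tx_n\not\to Tx$ (since $V$ is an open neighbourhood of $Tx$ that eventually must contain the tail of a convergent sequence). This contradicts sequential continuity, completing the proof.

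The main obstacle, and the only place requiring genuine care, is the converse direction: one must be sure that the balls $B_{\eta_1}(x,\tfrac1n c)$ really do form a neighbourhood base at $x$ (this is exactly the first-countability lemma already proved) and that membership $x_n\in B_{\eta_1}(x,\tfrac1n c)$ forces norm convergence $d_{\eta_1}(x_n,x)\to\theta$. For the latter, from $\theta\preceq d_{\eta_1}(x_n,x)\ll \tfrac1n c$ and normality of $P_1$ with constant $K$ we get $\|d_{\eta_1}(x_n,x)\|\le \tfrac{K}{n}\|c\|\to 0$, and then Lemma~\ref{lemma1} gives $x_n\to x$. A secondary, routine point is verifying that each $B_\eta(\cdot,c)$ with $c\gg\theta$ is itself open in the natural topology (so that it is a legitimate neighbourhood), which follows from Lemma~\ref{lemma22} in the usual way; one could alternatively phrase everything in terms of arbitrary open neighbourhoods and avoid this check entirely.
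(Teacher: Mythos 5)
Your proof is correct and takes essentially the same route as the paper: the forward implication is the same ball-chasing argument, and for the converse the paper simply writes ``since $(X_1,d_{\eta_1})$ is first countable, the converse holds,'' which is exactly the standard contraposition argument you spell out using the local base $\bigl\{B_{\eta_1}\bigl(x,\tfrac{1}{n}c\bigr)\bigr\}$. You merely make explicit the details (and the caveat about balls being neighbourhoods) that the paper leaves implicit.
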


\begin{proof}
	Assume $x_n \to x$ and let $\theta \ll c$. Since $T$ is continuous at $x \in X$, then find $\theta \ll c_1$ such that $T (B_{\eta_{{}_1}}(x, c_1 )) \subset B_{\eta_{{}_2}}(T x, c).$ By convergence of $x_n$, find $n_0$ such that $d_{\eta_{{}_1}}(x_n , x) \ll c_1$ $\forall n \geq n_0$. But then $d_{\eta_{{}2}}(T x_n , T x)\ll c, \forall n \geq n_0.$ Since $(X_1,d_{\eta_{{}_1}})$ is a first countable topological space, then the converse holds.
\end{proof}

\begin{definition}
	Let $( X , d_\eta )$ be an $\eta$-cone metric space. If for any sequence $\{x_n\}$ in $X$, there
	is a subsequence $\{x_{n_i} \}$ of $\{x_n \}$ such that $\{x_{n_i} \}$ is convergent in $X$. Then $X$ is called a
	sequentially compact $\eta$-cone metric space.
\end{definition}

\section{More on topological properties.}

In this section, we discuss some properties of closed sets in $\eta$-cone metric spaces.

\begin{lemma}\label{lemmapro}
	Let $( X , d_\eta )$ be an $\eta$-cone metric space, $\mathcal{T}_\eta$ be the natural topology on $( X , d_\eta )$ and $A$ be a subset of $X$.
	$A$ is closed\footnote{A closed set is the complement of an element of $\mathcal{T}_\eta$} if and only if for any sequence $\{ x_n \}$ in $A$ which converges to $x$, we have $x \in A$.
\end{lemma}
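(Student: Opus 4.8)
The plan is to prove both implications directly from the definitions, using Lemma~\ref{lemma1} to translate the cone-convergence condition into the statement $d_\eta(x_n,x)\to\theta$ in norm, which is easier to manipulate.

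First I would prove the contrapositive of the forward direction. Suppose $A$ is not closed, so $X\setminus A$ is not open, meaning there exists $x\in X\setminus A$ such that for every $c\gg\theta$ the ball $B_\eta(x,c)$ meets $A$. Using Lemma~\ref{yes}(b) I would fix some $c_0\gg\theta$ and note that $\frac{1}{n}c_0\gg\theta$ for each $n$, so for each $n$ I can pick $x_n\in A$ with $d_\eta(x_n,x)\ll\frac1n c_0$. Then normality of $P$ gives $\|d_\eta(x_n,x)\|\le \frac{K}{n}\|c_0\|\to 0$, so by Lemma~\ref{lemma1} the sequence $\{x_n\}\subset A$ converges to $x\notin A$, which contradicts the hypothesis on sequences in $A$. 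Hence $A$ must be closed.

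For the converse, suppose $A$ is closed and let $\{x_n\}\subset A$ converge to $x\in X$; I must show $x\in A$. If not, then $x\in X\setminus A$, which is open, so there exists $c\gg\theta$ with $B_\eta(x,c)\subset X\setminus A$. But convergence of $x_n$ to $x$ yields $n_0$ with $d_\eta(x_n,x)\ll c$ for all $n\ge n_0$, i.e.\ $x_n\in B_\eta(x,c)\subset X\setminus A$ for all such $n$, contradicting $x_n\in A$. Therefore $x\in A$.

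The argument is essentially routine; the only mildly delicate point — the main obstacle — is the forward direction, where one must produce an honest sequence converging to the limit point. The subtlety is that the natural topology is defined by balls $B_\eta(x,c)$ over \emph{all} $c\gg\theta$, not a countable family a priori, so one needs Lemma~\ref{yes}(b) (or equivalently the first-countability already established) to extract the countable decreasing "radii" $\frac1n c_0$ needed to build the sequence, and then normality of the cone to convert the cone estimate into the norm convergence required by Lemma~\ref{lemma1}. Everything else is a direct unwinding of the definitions of openness and convergence.
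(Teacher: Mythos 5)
Your proof is correct and follows essentially the same route as the paper: one direction by intersecting a ball $B_\eta(x,c)\subset X\setminus A$ with the tail of the convergent sequence, the other by extracting a sequence from the balls $B_\eta\bigl(x,\tfrac{1}{n}c_0\bigr)\cap A$. Your use of normality and Lemma~\ref{lemma1} to justify that this extracted sequence actually converges is a welcome extra detail the paper glosses over (the paper just asserts it is clear); the only nitpick is that $\tfrac{1}{n}c_0\gg\theta$ comes from Lemma~\ref{yes}(a) rather than (b).
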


\begin{proof}
	Assume that $A$ is closed and let $\{ x_n \}$ be a sequence in $A$ such that $x_n \to x$. Let us prove
	that $x \in A$. Assume not, i.e. $x \notin A$. Since $A$ is closed, then there exists $c \gg \theta$ such that $B_\eta( x , c) \cap A = \emptyset$ . Since $\{ x_n \}$ converges
	to $x$, then there exists $N \geq 1$ such that for any $n \geq N$ we have $x_n \in B_\eta( x , c)$ . Hence $x_n \in B_\eta( x , c) \cap A$, which leads to a
	contradiction. Conversely assume that for any sequence $\{ x_n \}$ in $A$ which converges to $x$, we have $x \in A$. Let us prove that $A$
	is closed. Let $x \notin A$. We need to prove that there exists $c\gg \theta$ such that $B_\eta( x , c) \cap A = \emptyset$. Assume not, i.e. for any $c\gg \theta$, we
	have $B_\eta( x , c) \cap A \neq \emptyset$. So for any $n \geq 1$, choose $x_n \in B_\eta\left( x , \frac{1}{n}c\right) \cap A$. Clearly we have $\{ x_n \}$ converges to $x$. Our assumption on
	$A$ implies $x \in A$, a contradiction. So $A$ is closed.
\end{proof}

\begin{proposition}
	Let $( X , d_\eta )$ be an $\eta$-cone metric type space and $\mathcal{T}_\eta$ be the topology defined above. Moreover, suppose that $\sup_{x,y\in X}\eta(x,y) = L <\infty.$
	Then for any nonempty subset $A \subset X$, if we define $\bar{A}$ to be the intersection of all closed subsets of $X$ which contains $A$, then for any $x \in \bar{A}$ and for any $c\gg \theta$, we have
	\[B_\eta( x , c) \cap A \neq \emptyset .\]

\end{proposition}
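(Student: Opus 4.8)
The plan is to show that the set $A^{*} = \{x \in X : B_\eta(x,c) \cap A \neq \emptyset \text{ for every } c \gg \theta\}$ is a closed set containing $A$; since $\bar A$ is the intersection of all closed sets containing $A$, this will force $\bar A \subseteq A^{*}$, which is exactly the claim. That $A \subseteq A^{*}$ is immediate because $x \in B_\eta(x,c)$ for every $c \gg \theta$ (as $d_\eta(x,x) = \theta \ll c$ by (d1) and the fact that $c \in Int(P)$), so every $x \in A$ witnesses $x \in B_\eta(x,c) \cap A$.

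The substance is proving $A^{*}$ is closed, and here I would invoke Lemma \ref{lemmapro}: it suffices to take a sequence $\{x_n\} \subseteq A^{*}$ with $x_n \to x$ and show $x \in A^{*}$. So fix an arbitrary $c \gg \theta$; I must produce a point of $A$ inside $B_\eta(x,c)$. Using Lemma \ref{lemma22} (or Lemma \ref{yes}(b)) pick $c' \gg \theta$ with $c' \ll \frac{1}{2L}c$, where $L = \sup_{x,y}\eta(x,y)$. By convergence of $\{x_n\}$ choose $n$ large enough that $d_\eta(x_n,x) \ll \frac{1}{2L}c$. Since $x_n \in A^{*}$, the ball $B_\eta(x_n, c')$ meets $A$, so pick $a \in A$ with $d_\eta(x_n,a) \ll c'$. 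Now apply the $\eta$-triangle inequality (d3):
\[
d_\eta(x,a) \preceq \eta(x,a)\big[d_\eta(x,x_n) + d_\eta(x_n,a)\big] \preceq L\left[\frac{1}{2L}c + c'\right].
\]
Since $c' \ll \frac{1}{2L}c$ we get $\frac{1}{2L}c + c' \ll \frac{1}{L}c$, hence $d_\eta(x,a) \ll c$ using Lemma \ref{yes}(a) (that $\lambda\, Int(P) \subseteq Int(P)$ and $Int(P) + Int(P) \subseteq Int(P)$), so $a \in B_\eta(x,c) \cap A$. As $c \gg \theta$ was arbitrary, $x \in A^{*}$, so $A^{*}$ is closed by Lemma \ref{lemmapro}, and the proof is complete.

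The main obstacle, and the reason the hypothesis $\sup_{x,y} \eta(x,y) = L < \infty$ is essential, is controlling the passage through the triangle inequality uniformly: without a global bound on $\eta$, the factor $\eta(x,a)$ appearing when we estimate $d_\eta(x,a)$ could be arbitrarily large (it depends on the unknown point $a \in A$), and no single choice of $c'$ would work for all $a$. With the uniform bound $L$ in hand, the estimate closes cleanly. A minor technical point to be careful about is the manipulation of strict inequalities $\ll$ under scalar multiplication and addition — these are exactly what Lemma \ref{yes}(a) provides, so I would cite it explicitly rather than treat such steps as obvious. One should also note the harmless abuse in writing $\frac{1}{2L}c$: this is the scalar $\frac{1}{2L} > 0$ times the vector $c$, which lies in $Int(P)$ by Lemma \ref{yes}(a).
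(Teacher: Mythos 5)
Your proposal is correct and follows essentially the same route as the paper: define the set $A^{*}$ of points whose every ball $B_\eta(\cdot,c)$ meets $A$, show $A\subseteq A^{*}$, prove $A^{*}$ closed via Lemma \ref{lemmapro} and the $\eta$-triangle inequality with the uniform bound $L$, and conclude $\bar A\subseteq A^{*}$. Your version is in fact slightly more careful than the paper's (which writes the normal constant $K$ where the bound $L$ on $\eta$ is what is actually used), but the argument is the same.
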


\begin{proof}
	Clearly $\bar{A} $ is the smallest closed subset which contains $A$. Set
	\[  A^* = \{ x \in X ; \text{ for any } c\gg\theta , \text{ there exists } a \in A \text{ such that }: d_\eta ( x , a ) \ll c \}.\] 
	
We have $A \subset A^*$. Next we prove that $A^*$ is closed. For this we use Lemma \ref{lemmapro}. Let $\{ x_n \}$ be a sequence in $A^*$ such that $\{ x_n \}$ converges to $x$. Let us prove that $x \in A^*$. Let $c\gg \theta$. Since $\{ x_n \}$ converges to $x$, there exists $N \geq 1$ such that for any $n \geq N$ we have $d_\eta(x,x_n) \ll \frac{1}{2(L+1)}c$. Since $x_n \in A^*$ , there exists $a \in A$ such that $d_\eta(a,x_n) \ll \frac{1}{2(L+1)}c$.	Hence
\[d_\eta(x,a) \preceq \eta(x,a) [d_\eta(x,x_n)+d_\eta(x_n,a)] \preceq K \left[ \frac{1}{2(L+1)}c + \frac{1}{2(L+1)}c\right]  \preceq c\]
which implies $x\in A^*$. Therefore $A^*$  is closed and contains $A$. By definition $\bar{A}\subset A^*$, which concludes the proof.

\end{proof}

\section{Fixed point theory.}

In this section we shall prove some fixed point theorems of contractive mappings. Our first theorem is an analogue of Banach contraction principle in the setting of $\eta$-cone metric
space. Throughout this section, for the mapping $T : (X,d_\eta) \to (X,d_\eta)$ and $x_0 \in X$, the set $I(x_0,T)=\{x_0,Tx_0,T^2x_0, \cdots \}$ represents the orbit of $x_0.$

Before we proceed to our first fixed point result, there is an important observation that ought to be made. As we know, the metric $d$ on a metric space $(X,d)$ possesses the continuity property, which is equivalent to the sequential continuity since any metric space is first countable. However, as the next example will demonstrate, $\eta$-cone metrics are not always continuous.

\begin{example}(Compare \cite[Example 3]{alex})
	Let $X = \mathbb{N} \cup \{\infty\}$ and let $D : X \times X \to \mathbb{R}$ be defined by:
$$
D(m,n)=
\begin{cases}
0,\ \ \ \ \ \ \ \ \ \ \ 
\text{if } m=n,\\
\left| \frac{1}{m}-\frac{1}{n} \right|,  \ \text{if } m \text{ and } n \text{ are even or } mn = \infty\\
5\ \  \ \ \ \ \ \ \ \ \ \ 
\text{if } m \text{ and } n \text{ are odd and } m\neq n\\
2\ \ \ \ \ \ \ \ \ \ \ 
\text{ otherwise.}
\end{cases}
$$
Thus, $(X, d_\eta)$ is an $\eta$-cone metric space where\footnote{Note that here $E=\mathbb{R}, P=[0,\infty)$.} $\eta(x,y)=3$ and $d_\eta(x,y)=D(x,y)$ whenever $x,y \in X.$  Considering the sequence $\{x_n\}$ defined by $x_n=2n$ and setting $\frac{1}{\infty}:=0$, we have that
$$ d_\eta(x_n,\infty) = \frac{1}{2n} \to 0 \quad (\text{ i.e. } x_n \to \infty).$$
However
$$ d_\eta(x_n,1) = 2, \text{ hence } \lim\limits_{n \to \infty}d_\eta(x_n,1)= 2 \neq  1= d_\eta(\infty,1). $$
\end{example}

\begin{theorem}\label{theorem1}
	Let $ (X,d_\eta)$ be a complete $\eta$-cone metric space such that $d_\eta$
	is continuous. Suppose the mapping $T : (X,d_\eta) \to (X,d_\eta)$ satisfies the contractive condition
	\begin{equation}\label{condition1}
		d_\eta(Tx,Ty) \preceq k d_\eta(x,y),
	\end{equation}
	 
for some $k \in [0,1)$	and for all $x,y \in X.$ 

Moreover, for any $x_0 \in X$, suppose that $\lim\limits_{n,m\to \infty}\eta(x_n,x_m)<\frac{1}{k}$ where $x_n,x_m \in I(x_0,T) $. Then $T$ has exactly one fixed point $x^*.$ Moreover for each $x \in X$, $T^n x \to x^*$.
	%dθ is a continuous functional
\end{theorem}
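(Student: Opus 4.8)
The plan is to imitate the classical Banach fixed point argument, but track carefully how the local factor $\eta$ enters the estimates along the orbit. First I would fix $x_0 \in X$ and set $x_n = T^n x_0$, so that iterating the contraction \eqref{condition1} gives $d_\eta(x_n, x_{n+1}) \preceq k^n d_\eta(x_0, x_1)$. The first genuine step is to show $\{x_n\}$ is Cauchy. Using (d3) repeatedly to telescope $d_\eta(x_n, x_m)$ for $m > n$ produces a bound of the shape $\sum$ of products of $\eta$-values times powers of $k$; concretely, $d_\eta(x_n, x_m) \preceq \sum_{j=n}^{m-1} \big(\prod \eta(x_i,x_{\,\cdot})\big) k^j d_\eta(x_0,x_1)$. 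Because $\limsup_{n,m} \eta(x_n,x_m) < 1/k$, one can choose $N$ and $\lambda$ with $k\lambda < 1$ such that $\eta(x_i,x_j) \le \lambda$ for all $i,j \ge N$; then for $n \ge N$ the tail is dominated (after applying normality of $P$ and taking norms, via the standard cone estimates) by a convergent geometric-type series $\sum_{j \ge n}(k\lambda)^{j}\|d_\eta(x_0,x_1)\| \cdot C$, which tends to $0$. By Lemma~\ref{lemma4} this gives that $\{x_n\}$ is Cauchy, and completeness yields a limit $x^*$.

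Next I would show $x^*$ is a fixed point. Here is where the hypothesis that $d_\eta$ is continuous does the work: from $x_n \to x^*$ and continuity of $T$ (which follows from the contraction \eqref{condition1} together with Lemma~\ref{lemma1}, since $d_\eta(Tx_n, Tx^*) \preceq k\, d_\eta(x_n,x^*) \to \theta$) we get $x_{n+1} = Tx_n \to Tx^*$; but $x_{n+1} \to x^*$ as well, so by continuity of $d_\eta$ and uniqueness of limits — or directly by Lemma~\ref{yes}(c) applied to $d_\eta(x^*, Tx^*) \preceq \eta(x^*, Tx^*)[d_\eta(x^*, x_{n+1}) + d_\eta(x_{n+1}, Tx^*)]$ with both bracketed terms going to $\theta$ — we conclude $d_\eta(x^*, Tx^*) = \theta$, i.e. $Tx^* = x^*$.

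For uniqueness, suppose $y^*$ is another fixed point. Then $d_\eta(x^*, y^*) = d_\eta(Tx^*, Ty^*) \preceq k\, d_\eta(x^*,y^*)$, so $(1-k) d_\eta(x^*, y^*) \preceq \theta$; since $P$ is a cone and $1 - k > 0$, normality gives $\|d_\eta(x^*,y^*)\| \le C \cdot 0$ after a standard argument, hence $d_\eta(x^*, y^*) = \theta$ and $x^* = y^*$. Finally, for an arbitrary starting point $x \in X$, the same telescoping/Cauchy argument shows $T^n x$ converges; since its limit must be a fixed point and the fixed point is unique, $T^n x \to x^*$. The main obstacle is the Cauchy step: unlike in a metric type space where the triangle-inequality constant is global, here the products $\prod \eta(x_i, x_j)$ could a priori blow up, so the argument must be organized to only invoke $\eta$-values with both indices beyond the threshold $N$ — splitting the chain $x_n, x_{n+1}, \dots, x_m$ into a fixed head plus a tail, or equivalently estimating $d_\eta(x_n, x_m)$ by induction on $m-n$ while keeping $n \ge N$ — so that all the relevant factors are bounded by $\lambda$ with $k\lambda < 1$; the limit hypothesis on $\eta$ along the orbit is precisely what makes this possible.
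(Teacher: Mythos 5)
Your proposal is correct and follows essentially the same route as the paper: iterate the contraction to get $d_\eta(x_n,x_{n+1})\preceq k^n d_\eta(x_0,x_1)$, telescope with (d3) to obtain sums of products of $\eta$-values times powers of $k$, use the hypothesis $\lim_{n,m\to\infty}\eta(x_n,x_m)<\tfrac{1}{k}$ to dominate these by a convergent series and conclude Cauchyness, then finish via completeness, the triangle inequality at the limit, and the contraction for uniqueness. If anything, your explicit head/tail splitting with a uniform bound $\lambda$ satisfying $k\lambda<1$ beyond a threshold $N$ is a more careful execution of the Cauchy step than the paper's appeal to the ratio test, since the paper's partial sums $S_n$ depend on $m$ and the uniformity needed for $S_{m-1}-S_n\to 0$ is left implicit there.
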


\begin{proof}
	We choose any $x_0 \in X$ be arbitrary, define the sequence $\{x_n\}$ by $x_n = T^n x_0$. Then by successively applying inequality \eqref{condition1}, we obtain:

\begin{equation}\label{condition2}
	d_\eta(x_n,x_{n+1}) \preceq k^n d_\eta(x_0,x_1).
\end{equation}
	So for $m > n$, using property (d3) and condition \eqref{condition2}
	\begin{align}\label{formalise}
		d_\eta (x_n, x_m)  \preceq & \ \eta(x_n, x_m)k^
		n
		d_\eta(x_0, x_1) + \eta(x_n, x_m)\eta(x_{n+1}, x_m)k^
		{n+1}
		d_\eta(x_0, x_1) +\cdots + \nonumber \\
		 & \eta(x_n, x_m)\eta(x_{n+1}, x_m)\eta(x_{n+2}, x_m)\cdots \eta(x_{m-2}, x_m)\eta(x_{m-1}, x_m)k^{
		 m-1}
		 d_\eta (x_0, x_1) \nonumber \\
		 \preceq & d_\eta(x_0, x_1)[
		 \eta(x_1, x_m)\eta(x_2, x_m)\cdots \eta(x{n-1}, x_m)\eta(x_n, x_m)k^n + \nonumber \\
		 & \eta(x_1, x_m)\eta(x_2, x_m)\cdots \eta(x_n, x_m)\eta(x_{n+1}, x_m)k^{
		 n+1} + \cdots + \nonumber \\
		 & \eta(x_1, x_m)\eta(x_2, x_m)\cdots \eta(x_n, x_m)\eta(x_{n+1}, x_m)\cdots \eta(x_{m-2}, x_m)θ(x_{m-1}, x_m)k^{m-1}].
		 		 	\end{align}
Since $\lim\limits_{n,n\to \infty} \eta(x_{n+1}, x_m)k < 1$, hence the series	$\sum_{n=0}^{\infty} k^n \prod_{i=1}^{n}\eta(x_i,x_m)$ converges by ratio test for each $m\geq 1 $. Let:
	
	\[  S= \sum_{n=0}^{\infty} k^n \prod_{i=1}^{n}\eta(x_i,x_m), \quad S_n = \sum_{j=1}^{n} k^j \prod_{i=1}^{j}\eta(x_i,x_m).   \]
Thus for $m > n$, the above inequality \eqref{formalise} becomes:	

\[ d_\eta(x_n, x_m) \preceq [S_{m-1} - S_n]d_\eta(x_0, x_1).
  \]
We get $$\|d_\eta(x_n , x_m )\| \leq K | S_{m-1} - S_n|  \|d_\eta(x_1 , x_0 )\|.$$	
This implies $d_\eta(x_n , x_m ) \to \theta \ (n, m \to \infty).$ Hence
$\{x_n \}$ is a Cauchy sequence. By the completeness of $X$, there is $x^* \in X$ such that $x_n \to x^*$.

\begin{align*}
	d_\eta(Tx^*, x^*) \preceq & \eta(Tx^*, x^*)[d_\eta(Tx^*, x_n) + d_\eta(x_n, x^*)] \\
	\preceq &  \eta(Tx^*, x^*)[kd_\eta(x^*, x_{n-1}) + d_\eta (x_n, x^*)].
\end{align*}
Then 
$$ \|d_\eta(T x^* , x^*)\|\leq \ K \ |\eta(Tx^*, x^*)|\ [ k \| d_\eta(x^*,x_{n-1})\|  + \|d_\eta(x_{n} , x^*)\| ]\to 0.$$

Hence $\|d(T x^* , x^* )\| =0.$ This implies $T x^* =x^*$ . So $x^*$ is a fixed point of $T$.
Now if $y^*$ is a fixed point of $T$ , then

$$d(x^*, y^* ) = d(T x^* , T y^* ) \preceq k d(x^*, y^* ).$$
Therefore $\|d(x^*, y^* )\| = 0$ and $x^*=y^*.$
\end{proof}

%satisfies for some positive integer n

The proof of the following corollary is immediate.
\begin{corollary}
	Let $ (X,d_\eta)$ be a complete $\eta$-cone metric space such that $d_\eta$
	is continuous. Suppose the mapping $T : (X,d_\eta) \to (X,d_\eta)$ satisfies, for some positive integer $n$, the contractive condition
	\begin{equation}\label{condition2r}
	d_\eta(T^nx,T^ny) \preceq k d_\eta(x,y),
	\end{equation}
	
	for some $k \in [0,1)$	and for all $x,y \in X.$ 
	
	Moreover, for any $x_0 \in X$, suppose that $\lim\limits_{n,m\to \infty}\eta(x_n,x_m)<\frac{1}{k}$ where $x_n,x_m \in I(x_0,T) $. Then $T$ has exactly one fixed point $x^*.$ 
	%dθ is a continuous functional
\end{corollary}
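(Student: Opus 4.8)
The plan is to reduce the corollary to Theorem~\ref{theorem1} by applying that theorem to the map $S=T^n$. First I would observe that the contractive condition~\eqref{condition2r} says precisely that $S$ satisfies~\eqref{condition1}, i.e. $d_\eta(Sx,Sy)\preceq k\,d_\eta(x,y)$ for all $x,y\in X$ with the same $k\in[0,1)$. The ambient space $(X,d_\eta)$ is still a complete $\eta$-cone metric space with $d_\eta$ continuous, so the hypotheses of Theorem~\ref{theorem1} on the space and on $S$ are met, provided the orbit condition $\lim_{n,m\to\infty}\eta(x_n,x_m)<\frac1k$ transfers correctly. Here one must be slightly careful: the orbit of $x_0$ under $S$ is $I(x_0,S)=\{x_0,Sx_0,S^2x_0,\dots\}=\{x_0,T^nx_0,T^{2n}x_0,\dots\}\subseteq I(x_0,T)$, a subset of the full $T$-orbit, so the limit condition assumed for points of $I(x_0,T)$ in particular holds along points of $I(x_0,S)$. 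Hence Theorem~\ref{theorem1} applies to $S$ and yields a unique fixed point $x^*$ of $S$ with $S^mx\to x^*$ for every $x\in X$.

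The remaining step is the standard argument that a fixed point of $S=T^n$ is a fixed point of $T$. Apply $T$ to $Sx^*=x^*$: since $T$ commutes with its own powers, $S(Tx^*)=T^n(Tx^*)=T(T^nx^*)=T(Sx^*)=Tx^*$, so $Tx^*$ is also a fixed point of $S$. By the uniqueness of the fixed point of $S$ we conclude $Tx^*=x^*$, so $x^*$ is a fixed point of $T$. For uniqueness of the $T$-fixed point, any fixed point $y^*$ of $T$ is automatically a fixed point of $S$ (since $T^ny^*=y^*$), and $S$ has only one fixed point, so $y^*=x^*$. This gives ``exactly one fixed point'' as claimed.

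I do not expect a serious obstacle here; the corollary is genuinely a routine consequence. The one point that requires a sentence of care—and the only place the argument could be criticized—is the orbit hypothesis: it is stated for points $x_n,x_m\in I(x_0,T)$, and since $I(x_0,S)\subseteq I(x_0,T)$ the needed bound for $S$-orbit points is inherited, but one should note that the subsequential limit $\lim_{n,m\to\infty}\eta(x_n,x_m)$ along the sparser index set $\{0,n,2n,\dots\}$ is controlled by the assumed behaviour along the full sequence. Provided that reading of the hypothesis is intended (which the statement's phrasing supports), the proof is the two paragraphs above; no new estimate is needed beyond what Theorem~\ref{theorem1} already provides.
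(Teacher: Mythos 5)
Your proof is correct and is exactly the standard reduction the paper has in mind when it declares the corollary ``immediate'' (the paper omits the proof entirely): apply Theorem~\ref{theorem1} to $S=T^n$, then use $S(Tx^*)=T(Sx^*)=Tx^*$ and uniqueness of the fixed point of $S$ to conclude $Tx^*=x^*$. Your remark that the orbit hypothesis for $I(x_0,T)$ restricts correctly to $I(x_0,S)\subseteq I(x_0,T)$ is a worthwhile point of care that the paper does not even mention.
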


\begin{example}
Let $X =[0, +\infty), E = \mathbb{R}$ ,
	and $P= [0, +\infty)$. Let us define, for all $x,y \in X$ 
$d_\eta (x, y) : X \times X \to \mathbb{R}$ and $\eta : X \times X \to [1, \infty)$ as:
	\[d_\eta(x,y) = (x-y)^2, \text{ and } \eta(x,y)= x+y+2.   \]  
	Then $d_\eta$
	is a complete $\eta$-cone metric on $X$. Define $T : X \to X$ by $Tx = \frac{x}{2}$. We have:
	\[d_\eta(Tx,Ty) = d_\eta\left( \frac{x}{2}, \frac{y}{2}\right)= \left( \frac{x}{2}-\frac{y}{2}    \right)^2 \leq \frac{1}{3}(x-y)^2 = kd_\eta(x,y).
	\]
	
Note that for each $x \in X$, $T^nx = \frac{x}{2^n}$. Thus we obtain:	
\[   \lim\limits_{n,m\to \infty}\eta(x_n,x_m) = \lim\limits_{n,m\to \infty} \left( \frac{x}{2^n}+\frac{x}{2^m} + 2 \right)  <  3.         \]
Therefore, all conditions of Theorem \ref{theorem1} are satisfied hence $T$ has a unique fixed point.
\end{example}

\begin{theorem}\label{strict}
Let $ (X,d_\eta)$ be a sequentially compact $\eta$-cone metric space such that $d_\eta$
	is continuous, $\sup_{x,y}\eta(x,y)<\infty$ and the underlying cone $P$ is regular. Suppose the map $T : (X,d_\eta) \to (X,d_\eta)$ satisfies the contractive condition
	\begin{equation}\label{conditionstrict}
	d_\eta(Tx,Ty) \prec d_\eta(x,y),
	\end{equation}
	for all $x,y \in X, x\neq y.$ Then $T$ has a unique fixed point in $X$.
	
\end{theorem}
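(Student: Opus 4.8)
The plan is to adapt the classical Edelstein-type argument for strictly contractive maps on compact spaces. First I would observe that the function $f:X\to E$ defined by $f(x)=d_\eta(x,Tx)$ is, in an appropriate sense, ``lower bounded'' and attains its infimum: since $(X,d_\eta)$ is sequentially compact, pick a minimizing sequence $\{x_n\}$ for $\|f\|$ (or, using regularity of $P$, work directly with the order), extract a convergent subsequence $x_{n_i}\to x^*$, and use continuity of $d_\eta$ together with sequential continuity of $T$ (which follows because, $\eta$ being bounded, limits are unique by Lemma \ref{lemma2}, and in fact one must first check $T$ is continuous — note that \eqref{conditionstrict} gives $d_\eta(Tx,Ty)\prec d_\eta(x,y)$, so if $x_n\to x$ then $d_\eta(Tx_n,Tx)\prec d_\eta(x_n,x)\to\theta$, hence by Lemma \ref{lemma1} $Tx_n\to Tx$) to conclude that $f(x^*)$ is the minimum value, i.e. $d_\eta(x^*,Tx^*)\preceq d_\eta(x,Tx)$ for all $x$ after passing through norms and using normality and regularity.

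Next I would show $x^*$ is a fixed point by contradiction: suppose $x^*\neq Tx^*$. Apply \eqref{conditionstrict} with $x=x^*$, $y=Tx^*$ to get
\[
d_\eta(Tx^*,T(Tx^*)) \prec d_\eta(x^*,Tx^*).
\]
This says $f(Tx^*) \prec f(x^*)$ in the cone order, so $f(x^*)-f(Tx^*)\in \mathrm{Int}(P)$ is impossible to reconcile with $x^*$ being a minimizer once we translate the strict order into a strict norm inequality — here normality of $P$ (indeed one uses that $\theta\preceq u\prec v$ with the strict inequality, combined with regularity, forces $\|u\|<\|v\|$, or more carefully one argues directly in $E$ that a point dominated strictly by the minimum cannot exist). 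This contradiction yields $Tx^* = x^*$.

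For uniqueness, suppose $x^*$ and $y^*$ are both fixed points with $x^*\neq y^*$. Then
\[
d_\eta(x^*,y^*) = d_\eta(Tx^*,Ty^*) \prec d_\eta(x^*,y^*),
\]
which is absurd since no element of $E$ can be strictly less than itself (that would force $\theta \prec \theta$). Hence $x^* = y^*$.

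\textbf{Main obstacle.} The delicate point is the very first step: making rigorous sense of ``the minimum of $d_\eta(x,Tx)$'' when the values live in an ordered Banach space rather than in $\mathbb{R}$. The natural move is to minimize the real-valued function $x\mapsto \|d_\eta(x,Tx)\|$, but then recovering a genuine order-minimizer — or at least enough of one to run the contradiction in the second step — requires the regularity hypothesis on $P$ (to pass from a bounded monotone-type sequence to a limit) together with normality (to control norms by the order). One must be careful that $f$ is continuous as a map into $E$: this uses continuity of $d_\eta$, continuity of $T$ established above, and the triangle-type inequality (d3) with $\eta$ bounded. I expect that stitching these together — sequential compactness to get the candidate point, continuity to transfer the extremal property, and regularity/normality to convert cone-order strict inequalities into contradictions — is where all the real work lies; the contradiction steps themselves are then one-liners.
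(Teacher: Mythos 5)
Your overall strategy (an Edelstein-type compactness argument) is in the right family, and your uniqueness step coincides with the paper's. But there is a genuine gap at the heart of your plan: the passage from the cone-order strict inequality to a strict \emph{norm} inequality. You minimize the real-valued function $x\mapsto\|d_\eta(x,Tx)\|$ and then want to contradict minimality from $d_\eta(Tx^*,T^2x^*)\prec d_\eta(x^*,Tx^*)$. However, $\theta\preceq u\prec v$ does \emph{not} imply $\|u\|<\|v\|$, even for a regular normal cone: take $E=\mathbb{R}^2$ with the sup norm and $P$ the first quadrant (which is regular), $u=(1,0)$, $v=(1,1)$; then $u\prec v$ but $\|u\|=\|v\|=1$. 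So $\|f(Tx^*)\|$ could equal the minimum value $\|f(x^*)\|$ and no contradiction arises. Your parenthetical fallback (``argue directly in $E$ that a point dominated strictly by the minimum cannot exist'') does not help either, because your $x^*$ is only a minimizer of the norm, not an order-minimizer, so being strictly dominated in the order is perfectly consistent with it.

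The paper's proof avoids this trap entirely. It iterates $T$ from an arbitrary $x_0$, observes that $d_n=d_\eta(x_n,x_{n+1})$ is strictly decreasing in the cone order and bounded below by $\theta$, and invokes \emph{regularity} of $P$ to get $d_n\to d^*$ in $E$ (so every subsequence, in particular both $d_{n_i}$ and $d_{n_i+1}$, converges to the \emph{same} $d^*$). Sequential compactness gives $x_{n_i}\to x^*$, the contraction plus normality gives $Tx_{n_i}\to Tx^*$ and $T^2x_{n_i}\to T^2x^*$, and continuity of $d_\eta$ then forces $d_\eta(Tx^*,x^*)=d^*=d_\eta(T^2x^*,Tx^*)$. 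If $d^*\neq\theta$ the strict contraction yields $d^*\prec d^*$, which is absurd purely by irreflexivity of $\prec$ --- no norm comparison is ever needed. If you want to keep a minimization flavour, you would have to produce an order-theoretic minimizer rather than a norm minimizer, and the natural way to do that is precisely the monotone-orbit-plus-regularity argument the paper uses.
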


\begin{proof}
	Let $x_0\in X$ be arbitrary and construct the sequence $\{x_n\}$ such that $x_{n+1}=Tx_n.$ Moreover, we may assume, without loss of generality that $x_n\neq x_{m}$ for $n\neq m$. By setting $d_n = d_\eta(x_n , x_{n+1} )$, then, using condition \eqref{conditionstrict}, we write
	\[d_{n+1} = d_\eta(x_{n+1} , x_{n+2} ) = d_\eta(Tx_n , Tx_{n+1} ) \prec d_\eta(x_n , x_{n+1} ) =d_n.  \]
	
	Therefore $d_n$ is a decreasing sequence bounded below by $\theta$. Since $P$ is regular, there is $d^*\in E$ such that $d_n \to d^* (n  \to \infty)$. From the sequence compactness of $X$, there is a
	subsequence $\{x_{n_i} \}$ of $\{x_n \}$ and $x^*\in X$ such that $x_{n_i} \to x^* (i \to \infty)$. We have

	\[ d_\eta(Tx_{n_i},Tx^*) \prec d_\eta(x_{n_i},x^*) \quad i=1,2,\cdots.\]
So
\[ \|d_\eta(Tx_{n_i},Tx^*)\| \leq K \|d_\eta(x_{n_i},x^*)\| \ \to 0 \ (i \to \infty) \]	
	
where $K$ is the normal constant of $E$. Hence $Tx_{n_i} \to Tx^* (i \to \infty)$. In a similar manner, on establishes that  $T^2x_{n_i} \to T^2x^* (i \to \infty)$. Using the continuity of $d_\eta$, we write
\[\footnote{This is guaranteed by the boundedness of $\eta$.}d_{n_i}=d_\eta(Tx_{n_i},x_{n_i} )\to d_\eta(Tx^*,x^*)=d^* \text{ and } d_\eta(T^2x_{n_i},Tx_{n_i} )\to d_\eta(T^2x^*,Tx^*) \ (i \to \infty). \]	

Moreover, if we assume that $d^*\neq \theta,$ we have

\[ d^*= \lim\limits_{i\to \infty }d_{n_i+1}= \lim\limits_{i\to \infty }d_\eta(T^2x_{n_i},Tx_{n_i})  = d_\eta(T^2x^*,Tx^*) \prec d_\eta(Tx^*,x^*)=d^* , \]
--a contradiction, so $d^* =\theta, $ i.e. $Tx^*=x^*$ and $x^*$ is a fixed point of $T$. The uniqueness of the fixed point naturally comes from condition \eqref{conditionstrict}.

\end{proof}

%\newpage

As we mentioned earlier $\eta$-cone metric spaces have a metric like structure. Indeed we
have the following result.

\begin{theorem}(Compare \cite[Theorem 2.6]{khamsi})
	Let $(X,d_\eta)$ be an $\eta$-cone metric space (over the Banach space $E$ with the $K$-normal cone $P$). The mapping $D_\eta:X\times X \to [0,\infty)$ defined by $D_\eta(x,y)=\|d_\eta(x,y)\|$ satisfies the following properties
	\begin{itemize}
		\item[(D1)] $D_\eta(x,x)=0$ for any $x\in X$;
		\item[(D2)] $D_\eta(x,y)=D_\eta(y,x)$ for any $x,y\in X$;
		\item[(D3)] $D_\eta(x,y)\leq K\eta(x,y)  \big(  D_\eta(x,z)+D_\eta(z,y)\big)$, for any points $x,y,z\in X$.
	\end{itemize}
\end{theorem}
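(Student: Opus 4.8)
The plan is to verify the three properties (D1), (D2), (D3) directly from the corresponding axioms of an $\eta$-cone metric space and the normality of the cone $P$. Properties (D1) and (D2) are essentially immediate: from (d1) we have $d_\eta(x,x)=\theta$, so $D_\eta(x,x)=\|\theta\|=0$; and from (d2) we have $d_\eta(x,y)=d_\eta(y,x)$, hence taking norms gives $D_\eta(x,y)=D_\eta(y,x)$. No subtlety arises here.

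The substance of the proof is (D3). First I would apply axiom (d3) with the middle point $z$, giving
\[
\theta \preceq d_\eta(x,y) \preceq \eta(x,y)\big[d_\eta(x,z)+d_\eta(z,y)\big].
\]
Here I use that $\eta(x,y)\geq 1>0$ together with part (a) of Lemma \ref{yes} (closure of $\mathrm{Int}(P)$, and more basically of $P$, under multiplication by positive scalars and under addition) to guarantee the right-hand side lies in $P$, so that the chain $\theta\preceq d_\eta(x,y)\preceq \eta(x,y)[d_\eta(x,z)+d_\eta(z,y)]$ is a legitimate $\preceq$-inequality between elements of $E$ with the left end at $\theta$. Then the normality of $P$ with constant $K$ applies: $\theta\preceq u\preceq v$ implies $\|u\|\leq K\|v\|$. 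Taking $u=d_\eta(x,y)$ and $v=\eta(x,y)[d_\eta(x,z)+d_\eta(z,y)]$ yields
\[
D_\eta(x,y)=\|d_\eta(x,y)\|\leq K\,\big\|\eta(x,y)\big[d_\eta(x,z)+d_\eta(z,y)\big]\big\| = K\,\eta(x,y)\,\big\|d_\eta(x,z)+d_\eta(z,y)\big\|,
\]
where the last equality uses that $\eta(x,y)$ is a nonnegative real scalar so it factors out of the norm.

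Finally I would invoke the triangle inequality for the Banach norm $\|\cdot\|$ on $E$, namely $\|d_\eta(x,z)+d_\eta(z,y)\|\leq \|d_\eta(x,z)\|+\|d_\eta(z,y)\| = D_\eta(x,z)+D_\eta(z,y)$. Chaining this with the previous display gives exactly
\[
D_\eta(x,y)\leq K\,\eta(x,y)\big(D_\eta(x,z)+D_\eta(z,y)\big),
\]
which is (D3). The only point that requires a moment's care is the passage from the $\preceq$-inequality to the norm inequality: one must make sure normality is being applied to a pair $\theta\preceq u\preceq v$ with both relations genuinely holding in $P$, which is why the observation $\eta\geq 1$ and the scalar/sum closure of the cone are invoked; beyond that, the argument is a routine combination of normality and the norm triangle inequality, so I do not expect any real obstacle.
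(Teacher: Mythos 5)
Your proof is correct and follows essentially the same route as the paper's: apply axiom (d3), use $K$-normality of $P$ to pass from the $\preceq$-inequality to a norm inequality, factor out the scalar $\eta(x,y)$, and finish with the triangle inequality for the Banach norm. The extra care you take in justifying that normality applies to a genuine chain $\theta\preceq u\preceq v$ is a welcome refinement but does not change the argument.
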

Note that property (D3) does not give the classical triangle inequality satisfied by a distance and there are many examples where the triangle inequality fails. We are therefore led to the following definition.

\begin{proof}
The proofs of (D1) and (D2) are easy and therefore left to the reader. In order to prove (D3), let $x,y,z\in X,$ and since $(X,d_\eta)$ is an $\eta$cone metric space, we have:

\[ d_\eta(x,y) \preceq \eta(x,y) [d_\eta(x,z)+d_\eta(z,y)] .\]

Since $P$ is normal with constant $K$ we get

\begin{align*}
	\|d_\eta(x,y)\| & \leq K |\eta(x,y)| [\|d_\eta(x,z)+d_\eta(z,y)\|] \\
	&  \leq  K |\eta(x,y)| [\|d_\eta(x,z)\| +\|d_\eta(z,y)\|] 
\end{align*}
 i.e.
 
 \[D_\eta(x,y)\leq K\eta(x,y)  \big(  D_\eta(x,z)+D_\eta(z,y)\big)  .\]
 This completes the proof.

\end{proof}

\begin{definition}
	Let $X$ be a nonempty set, let the function $D:X\times X \to [0,\infty)$ and a function $\eta : X \times X \to [1, \infty)$ satisfy the following properties:
	\begin{itemize}
		\item[(D1)] $D(x,x)=0$ for any $x \in X$;
		\item[(D2)] $D(x,y)=D(y,x)$ for any $x,y\in X$;
		\item[(D3)] $D(x,y) \leq \eta(x,y) \big( D(x,z)+D(z,y) \big)$ for any points $x,y,z\in X$.
	\end{itemize}
	The triplet $(X,D,\eta)$ is called an \textbf{$\eta$-metric space}.
\end{definition}

It is obvious that $\eta$-metric spaces are natural extensions of metric type spaces.

\begin{example}
	Let $X=\{ 1,2,3 \}$ and the mapping $D:X\times X \to [0,\infty)$ defined by $D(1,2)=1/5,\ D(2,3)=1/4,\ D(1,3)=1/2$, $D(x,x)=0$ for any $x \in X$ and $D(x,y)=D(y,x)$ for any $x,y\in X$.
	Since 
	\[
	\frac{1}{2} = D(1,3)> D(1,2)+D(2,3)=\frac{9}{20},
	\]
\end{example}
then we conclude that $X$ is not a metric space. Nevertheless, with $\eta(x,y)=\frac{x}{2}+y$, it is very easy to check that $(X,D,\eta)$ is an $\eta$-metric space.

The next corollary follows immediately form the definition of a metric type space.

\begin{corollary}
Let $(X,D,\eta)$ be an $\eta$-metric space. If $\sup_{x,y}\eta(x,y)<\infty$, then $(X,D,\eta)$ is a metric type space.
\end{corollary}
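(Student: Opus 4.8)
The plan is to exhibit a single constant witnessing the metric-type axioms, namely $L := \sup_{x,y\in X}\eta(x,y)$. First I would record two easy observations: by hypothesis $L<\infty$, and since $\eta$ is valued in $[1,\infty)$ we have $L\geq 1$, so $L$ is an admissible metric-type constant. Then I would note that axioms (D1) and (D2) of an $\eta$-metric space coincide verbatim with the first two defining properties of a metric type space, so they transfer with no work at all.

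The only step needing an argument is the polygonal (triangle-type) inequality. For arbitrary $x,y,z\in X$, property (D3) of the $\eta$-metric space gives
\[
D(x,y)\leq \eta(x,y)\big(D(x,z)+D(z,y)\big),
\]
and replacing the pointwise value $\eta(x,y)$ by its uniform upper bound $L$ yields
\[
D(x,y)\leq L\big(D(x,z)+D(z,y)\big)\qquad\text{for all }x,y,z\in X,
\]
which is precisely the inequality defining a metric type space with constant $L$. Combining this with (D1) and (D2) shows that $(X,D)$ is a metric type space, which completes the proof.

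I do not foresee any genuine obstacle here: the corollary is simply the passage from the pointwise control $\eta(x,y)$ to the uniform bound $L$. The only points worth stating explicitly are that $L\geq 1$ (so that it is a legitimate metric-type constant) and that the finiteness of $L$ is exactly the stated hypothesis $\sup_{x,y}\eta(x,y)<\infty$.
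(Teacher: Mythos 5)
Your proposal is correct and matches the argument the paper intends: the paper states the corollary "follows immediately from the definition of a metric type space," and the intended step is exactly your passage from the pointwise coefficient $\eta(x,y)$ to the uniform bound $L=\sup_{x,y}\eta(x,y)\geq 1$ in property (D3). Nothing is missing.
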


The concepts of convergence, Cauchy sequence and completeness can easily be extended to the
case of an $\eta$(cone)-metric space.

%We complete this paper with the following fixed point result. 

We now try and formalise the inequality \eqref{formalise} (which is a generalization of property (D3)) in the case of $\eta$-metric space, namely we have the following lemma, whose proof is straightforward and shall therefore be omitted:

\begin{lemma}\label{triangle}
Let $(X,D,\eta)$ be an $\eta$-metric space. Let $x,y,z_i, i=1,2,\cdots,n \in X, \ n\geq 2$. Then we have the recursion:
\[ D(x,y)\leq \eta(x,y)\left( D(x,z_1)+\sum_{j=1}^{n-1}\left(\prod_{i=1}^{j}\eta(z_i,y)D(z_j,z_{j+1}) \right) + \prod_{i=1}^{n-1} \eta(z_i,y)D(z_n,y) \right). \]

\end{lemma}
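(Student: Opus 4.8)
The plan is to prove Lemma~\ref{triangle} by induction on $n$, peeling off one intermediate point at a time exactly as one does for the ordinary triangle inequality, but carrying along the accumulating product of $\eta$-factors. First I would record the base case $n=2$: applying (D3) to $x,y$ with the insertion point $z_1$ gives
\[ D(x,y)\leq \eta(x,y)\big(D(x,z_1)+D(z_1,y)\big), \]
and then applying (D3) to the pair $z_1,y$ with insertion point $z_2$ gives $D(z_1,y)\leq \eta(z_1,y)\big(D(z_1,z_2)+D(z_2,y)\big)$; substituting this into the previous line yields
\[ D(x,y)\leq \eta(x,y)\Big(D(x,z_1)+\eta(z_1,y)D(z_1,z_2)+\eta(z_1,y)D(z_2,y)\Big), \]
which is precisely the claimed formula for $n=2$ since $\prod_{i=1}^{1}\eta(z_i,y)=\eta(z_1,y)$ and $\prod_{i=1}^{n-1}\eta(z_i,y)=\eta(z_1,y)$.

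For the inductive step, assume the recursion holds for some $n\geq 2$ and given points $x,y,z_1,\dots,z_{n+1}$. Apply the case-$n$ formula to $x,y$ with intermediate points $z_1,\dots,z_n$; this bounds $D(x,y)$ by $\eta(x,y)$ times a sum whose last term is $\prod_{i=1}^{n-1}\eta(z_i,y)D(z_n,y)$. Now expand only that last term: by (D3) applied to the pair $z_n,y$ with insertion point $z_{n+1}$,
\[ D(z_n,y)\leq \eta(z_n,y)\big(D(z_n,z_{n+1})+D(z_{n+1},y)\big), \]
so $\prod_{i=1}^{n-1}\eta(z_i,y)D(z_n,y)\leq \prod_{i=1}^{n}\eta(z_i,y)D(z_n,z_{n+1})+\prod_{i=1}^{n}\eta(z_i,y)D(z_{n+1},y)$. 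Substituting this back, the first of these two new terms is exactly the $j=n$ summand of $\sum_{j=1}^{n}\big(\prod_{i=1}^{j}\eta(z_i,y)D(z_j,z_{j+1})\big)$, while the second is the new tail term $\prod_{i=1}^{n}\eta(z_i,y)D(z_{n+1},y)$; together with the untouched $j=1,\dots,n-1$ summands this reassembles the case-$(n+1)$ right-hand side, closing the induction.

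The only thing requiring care is bookkeeping of the products: one must check that the index ranges match up, i.e.\ that promoting $\prod_{i=1}^{n-1}$ to $\prod_{i=1}^{n}$ by absorbing the freshly produced factor $\eta(z_n,y)$ is consistent with the statement's convention that the $j$-th summand carries $\prod_{i=1}^{j}\eta(z_i,y)$ and the tail carries $\prod_{i=1}^{n-1}\eta(z_i,y)$ (which becomes $\prod_{i=1}^{n}\eta(z_i,y)$ when $n$ is replaced by $n+1$). Since every $\eta(z_i,y)\geq 1$ and every $D$-value is nonnegative, all the monotonicity steps used in the substitutions are valid and no term is ever dropped incorrectly. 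As the authors note, the argument is a routine formalisation of \eqref{formalise}; the substance is entirely in matching indices, so I would present it as the short induction above rather than a computation.
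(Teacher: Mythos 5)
Your induction is correct: the base case and the inductive step (expanding only the tail term $\prod_{i=1}^{n-1}\eta(z_i,y)D(z_n,y)$ via (D3) at $z_n,y$ with insertion point $z_{n+1}$, using $\eta\ge 1$ and $D\ge 0$ to preserve the inequality) reassemble exactly the stated right-hand side. The paper omits the proof as "straightforward," but the nested peeling you use is precisely the iterated application of (D3) underlying the displayed computation \eqref{formalise} that the lemma is said to formalise, so your argument is the intended one.
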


\begin{corollary}\label{cochi1}
	Let $(X,d_\eta)$ be an $\eta$-cone metric space such that $\sup_{x,y}\eta(x,y)<\infty$. Consider the $\eta$-metric space $(X,D,\eta)$ where $D(x,y)=\|d_\eta(x,y)\|$. Let $(y_n)$ be a sequence in $(X,D,\eta)$ such that 
\begin{equation}\label{cochi}
D(y_n,y_{n+1}) \leq \lambda D(y_{n-1},y_n)
\end{equation}
for some $0<\lambda<1$. If $\lim\limits_{n,m\to \infty}\eta(y_n,y_m)<\frac{1}{\lambda}$,
then $(y_n)$ is Cauchy.
\end{corollary}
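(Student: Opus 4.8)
The plan is to mimic the Cauchy-sequence argument already carried out in the proof of Theorem \ref{theorem1}, but now phrased entirely in terms of the real-valued $\eta$-metric $D$ rather than the cone metric $d_\eta$. First I would iterate the contraction hypothesis \eqref{cochi} to obtain $D(y_n,y_{n+1})\leq \lambda^{n}D(y_0,y_1)$ for every $n$. Then, fixing $m>n$ and applying the recursion of Lemma \ref{triangle} with $z_j=y_{n+j}$, I would bound $D(y_n,y_m)$ by a finite sum of the shape $\sum_{j=n}^{m-1}\lambda^{j}\prod_{i=1}^{j}\eta(y_i,y_m)\,D(y_0,y_1)$, exactly as in \eqref{formalise}. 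The point of introducing the partial sums is that this is the tail $S_{m-1}-S_n$ of the series $S=\sum_{j=0}^{\infty}\lambda^{j}\prod_{i=1}^{j}\eta(y_i,y_m)$.

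The key step is the convergence of that series. Here I would invoke the ratio test: the ratio of consecutive terms is $\lambda\,\eta(y_{j+1},y_m)$, and by hypothesis $\limsup_{j,m\to\infty}\lambda\,\eta(y_j,y_m)<1$, so for each fixed $m$ the series converges; moreover, a uniform estimate is available because $\eta(y_i,y_m)$ is eventually bounded by something strictly less than $1/\lambda$. Consequently $(S_n)$ is Cauchy in $\mathbb{R}$ uniformly in $m$, and from $D(y_n,y_m)\leq (S_{m-1}-S_n)\,D(y_0,y_1)$ we conclude $D(y_n,y_m)\to 0$ as $n,m\to\infty$, i.e. $(y_n)$ is Cauchy. (That $D(x,y)=\|d_\eta(x,y)\|$ and $\sup_{x,y}\eta(x,y)<\infty$ are available but not really needed beyond making $(X,D,\eta)$ a genuine $\eta$-metric space; the boundedness of $\eta$ is only used to legitimize Lemma \ref{triangle} and the partial-sum manipulation.)

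The main obstacle is bookkeeping rather than conceptual: one must be careful that the product $\prod_{i=1}^{j}\eta(y_i,y_m)$ carries an index $m$ that is itself going to infinity, so the ratio-test argument has to be made uniform in $m$. Concretely, from $\lim_{j,m\to\infty}\eta(y_j,y_m)=:\ell<1/\lambda$ one picks $\varepsilon>0$ with $\lambda(\ell+\varepsilon)<1$ and $N$ such that $\eta(y_j,y_m)\leq \ell+\varepsilon$ for all $j,m\geq N$; then for $n\geq N$ the tail $S_{m-1}-S_n$ is dominated by a constant (depending on $\prod_{i=1}^{N}\eta(y_i,y_m)$, itself bounded since $\sup\eta<\infty$) times $\sum_{j\geq n}(\lambda(\ell+\varepsilon))^{j}$, which tends to $0$ as $n\to\infty$ independently of $m$. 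That is the whole argument; everything else is the routine translation of the cone-metric computation in Theorem \ref{theorem1} into the normed setting, which is why it is natural to state it as a corollary.
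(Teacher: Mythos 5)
Your proposal follows essentially the same route as the paper's own proof: iterate \eqref{cochi} to get $D(y_n,y_{n+1})\leq\lambda^n D(y_0,y_1)$, apply Lemma \ref{triangle} to bound $D(y_n,y_m)$ by the tail $S_{m-1}-S_n$ of the series $\sum_j \lambda^j\prod_{i\le j}\eta(y_i,y_m)$, and use the ratio test under the hypothesis $\lim_{n,m\to\infty}\eta(y_n,y_m)<1/\lambda$ to conclude that the tail vanishes. If anything, your treatment of the uniformity in $m$ (choosing $\varepsilon$ with $\lambda(\ell+\varepsilon)<1$ and bounding the finite initial product via $\sup\eta<\infty$) is more careful than the paper's, which leaves that point implicit.
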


\begin{proof}
	By \eqref{cochi}, we have
	\begin{equation}\label{eq}
	 D(y_n,y_{n+1}) \leq \lambda^n D(y_0,y_1)
	 \end{equation}
	
	Let $m>n \in \mathbb{N}$. Using Lemma \ref{triangle}, we write

	\[ D(y_n,y_m)\leq \eta(y_n,y_m)\left( D(y_n,y_{n+1})+\sum_{j=1}^{m-n-2}\left(\prod_{i=1}^{j}\eta(y_i,y_m)D(y_j,y_{j+1}) \right) + \prod_{i=1}^{m-n-2} \eta(z_i,y)D(y_{m-1},y_m) \right). \]

and by the inequality \eqref{eq}, we obtain

\[ D(y_n,y_m)\leq \eta(y_n,y_m)\left( \lambda^n D(y_0,y_1)+\sum_{j=1}^{m-n-2}\left(\prod_{i=1}^{j}\eta(y_i,y_m)\lambda^j D(y_0,y_1) \right) + \prod_{i=1}^{m-n-2} \eta(z_i,y)\lambda^{m-1} D(y_0,y_1) \right). \]

Since $\lim\limits_{n,n\to \infty} \eta(y_{n+1}, y_m)\lambda < 1$ so that the series	$\sum_{n=0}^{\infty} \lambda^n \prod_{i=1}^{n}\eta(x_i,x_m)$ converges by ratio test for each $m\geq 1 $. Let:

\[  S= \sum_{n=0}^{\infty} \lambda^n \prod_{i=1}^{n}\eta(x_i,x_m), \quad S_n = \sum_{j=1}^{n} \lambda^j \prod_{i=1}^{j}\eta(y_i,y_m).   \]
Thus for $m > n$, the above inequality becomes:	

\[ D(y_n, y_m) \leq [S_{m-1} - S_n]D(y_0, y_1).
\]

This implies $d_\eta(y_n , y_m ) \to \theta \ (n, m \to \infty)$\footnote{In fact $ D(y_n , y_m ) \to 0 \ (n, m \to \infty).$}. Hence
$\{y_n \}$ is a Cauchy sequence.

\end{proof}

We conclude this manuscript with the following interesting fixed point result. Let $(X,d_\eta)$ be an $\eta$-cone metric space such that $d_\eta$ is continuous. For $x, y \in X$ we set $ D_\eta(x,y )=\|d_\eta(x,y)\|$. Then $(X,D_\eta)$ is the $\eta$-metric space generated by $(X,d_\eta)$. We have:

\begin{theorem}\label{fin}
	Let $(X, D_\eta)$ be the complete $\eta$-metric space generated by the $\eta$-cone metric space $(X,d_\eta)$ and $T : X \to X$ a self mapping on $X$ such that for each $x, y \in X$:
	
	\begin{align}\label{conditionfin}
		D_\eta(T x, T y) & \leq \alpha (x, y) D_\eta(x, y) + \beta (x, y) D_\eta(x, T x) + \gamma (x, y) D_\eta(y, T y) \nonumber\\
		& +\delta (x, y) [D_\eta(x, T y) + D_\eta(y, T x)] ,
	\end{align}
where $\alpha,\beta, \gamma, \delta$ are functions from $X \times X$ into $[0, 1)$ such that

\begin{equation}
	\lambda = \sup \{\alpha (x, y) + \beta (x, y) + \gamma (x, y) + 2\eta (x, y)\delta(x,y) : x, y \in X\} < 1.
\end{equation}

If $\sup_{x,y}\eta(x,y)<\frac{1}{\lambda}$,
then $T$ has a unique fixed point in $X.$
\end{theorem}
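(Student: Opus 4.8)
The plan is to run the standard Picard iteration argument for generalized contractions, adapted to the $\eta$-metric setting using the tools developed earlier in the paper. Fix an arbitrary $x_0 \in X$ and define $x_{n+1} = T x_n$; I will show $(x_n)$ is Cauchy, that its limit is a fixed point, and that the fixed point is unique.

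First I would establish the key recursive estimate $D_\eta(x_n, x_{n+1}) \leq \lambda D_\eta(x_{n-1}, x_n)$. To do this, apply \eqref{conditionfin} with $x = x_{n-1}$, $y = x_n$. The terms $D_\eta(x_{n-1}, T x_{n-1}) = D_\eta(x_{n-1}, x_n)$ and $D_\eta(x_n, T x_n) = D_\eta(x_n, x_{n+1})$ appear on the right; the cross term is $\delta(x_{n-1},x_n)[D_\eta(x_{n-1}, x_{n+1}) + D_\eta(x_n, x_n)]$, and $D_\eta(x_n,x_n) = 0$ by (D1) while $D_\eta(x_{n-1}, x_{n+1}) \leq \eta(x_{n-1}, x_{n+1})(D_\eta(x_{n-1},x_n) + D_\eta(x_n, x_{n+1}))$ by (D3). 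Collecting terms and using the definition of $\lambda$, together with $\eta(x_{n-1},x_{n+1}) \le \sup_{x,y}\eta(x,y)$, one can absorb the $D_\eta(x_n,x_{n+1})$ contribution on the right-hand side (this needs $\lambda < 1$ so the coefficient on the left stays positive after rearranging) and conclude $D_\eta(x_n, x_{n+1}) \le \lambda D_\eta(x_{n-1}, x_n)$. The precise bookkeeping here — making sure the $\delta$ cross-term's $\eta$ factor is correctly controlled and that the rearrangement is legitimate — is the step I expect to be the main obstacle, since the hypothesis mixes $\sup_{x,y}\eta(x,y) < 1/\lambda$ with the definition of $\lambda$ and one must be careful about which $\eta$-values get bounded by which quantity.

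Once the geometric-type recursion is in hand, I would invoke Corollary \ref{cochi1}: the sequence $(x_n)$ lies in the $\eta$-metric space $(X, D_\eta, \eta)$, satisfies $D_\eta(x_n, x_{n+1}) \le \lambda D_\eta(x_{n-1}, x_n)$, and since $\sup_{x,y}\eta(x,y) < 1/\lambda$ we certainly have $\lim_{n,m\to\infty}\eta(x_n, x_m) \le \sup_{x,y}\eta(x,y) < 1/\lambda$. Hence $(x_n)$ is Cauchy, and by completeness of $(X, D_\eta)$ it converges to some $x^* \in X$.

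To show $T x^* = x^*$, estimate $D_\eta(x^*, T x^*)$ via (D3): $D_\eta(x^*, T x^*) \leq \eta(x^*, T x^*)(D_\eta(x^*, x_{n+1}) + D_\eta(x_{n+1}, T x^*))$, then bound $D_\eta(x_{n+1}, T x^*) = D_\eta(T x_n, T x^*)$ using \eqref{conditionfin} with $x = x_n$, $y = x^*$; every term on the right then involves either $D_\eta(x_n, x^*) \to 0$, $D_\eta(x_n, x_{n+1}) \to 0$, $D_\eta(x^*, T x^*)$ (with coefficient bounded by $\lambda$, which can be absorbed), or the cross term $D_\eta(x^*, T x^*) + D_\eta(x_n, x_{n+1})$ again handled by (D3) and convergence. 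Letting $n \to \infty$ and using $\lambda < 1$ (again with the $\sup \eta < 1/\lambda$ condition to keep coefficients admissible) forces $D_\eta(x^*, T x^*) = 0$, hence $T x^* = x^*$. For uniqueness, if $T y^* = y^*$ then \eqref{conditionfin} with $x = x^*$, $y = y^*$ gives $D_\eta(x^*, y^*) \leq (\alpha(x^*,y^*) + 2\delta(x^*,y^*))D_\eta(x^*, y^*) \leq \lambda D_\eta(x^*, y^*)$ after using $D_\eta(x^*, T x^*) = D_\eta(y^*, T y^*) = 0$ and $D_\eta(x^*, T y^*) = D_\eta(x^*, y^*) = D_\eta(y^*, T x^*)$; since $\lambda < 1$ this yields $D_\eta(x^*, y^*) = 0$, i.e. $x^* = y^*$.
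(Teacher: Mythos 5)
Your proposal is correct and follows essentially the same route as the paper: Picard iteration, controlling the cross term $D_\eta(x_{n-1},x_{n+1})$ via (D3) to extract the recursion $D_\eta(x_n,x_{n+1})\leq \lambda D_\eta(x_{n-1},x_n)$, then Corollary \ref{cochi1} for Cauchyness, completeness for the limit, and the standard limit and uniqueness arguments. The only cosmetic difference is that you absorb the $D_\eta(x_n,x_{n+1})$ term by rearrangement where the paper uses a $\max$ argument; both yield the same coefficient bound, and the subtlety you flag about which pair of points $\eta$ is evaluated at in the $\delta$ cross-term is present (and silently glossed over) in the paper's own proof as well.
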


\begin{proof}
Fix $x_0 \in X$ and construct the sequence $\{x_n\}$ such that $x_{n+1}=Tx_n.$
 From \eqref{conditionfin},
 \begin{align*}
 	D_\eta(x_n,x_{n+1}) =	D_\eta(Tx_{n-1},Tx_{n}) & \leq \alpha D_\eta(x_{n-1},x_n)+\beta D_\eta(x_{n-1},x_n) + \gamma D_\eta(x_n,x_{n+1}) \\ & +  \delta[D_\eta(x_{n-1},x_{n+1})+D_\eta(x_n,x_n) ]
 \end{align*}

where $\alpha.\beta, \gamma$ and $\delta$ are evaluated at $(x_{n-1}, x_n)$. By property (D3) we have

$$D_\eta (x_{n-1} , x_{n+1} ) \leq \eta(x_{n-1} , x_{n+1}) [D_\eta (x_{n-1} , x_{n}) + D_\eta (x_n , x_{n+1} )] ,$$
which implies that 

$$ D_\eta (x_{n-1} , x_{n+1} ) \leq 2 \eta(x_{n-1} , x_{n+1}) \max\left\lbrace D_\eta (x_{n-1} , x_{n}) , D_\eta (x_n , x_{n+1} ) \right\rbrace.$$

Hence

\begin{align*}
D_\eta(x_n,x_{n+1}) & \leq (\alpha+\beta+\gamma)\max\left\lbrace D_\eta (x_{n-1} , x_{n}) , D_\eta (x_n , x_{n+1} ) \right\rbrace \\
  & + 2 \eta(x_{n-1} , x_{n+1})\delta \max\left\lbrace D_\eta (x_{n-1} , x_{n}) , D_\eta (x_n , x_{n+1} ) \right\rbrace.
\end{align*}
Then

\[D_\eta(x_n,x_{n+1}) \leq \lambda  \max\left\lbrace D_\eta (x_{n-1} , x_{n}) , D_\eta (x_n , x_{n+1} ) \right\rbrace. \]

Since $\lambda < 1$, then
\[ D_\eta(x_n,x_{n+1}) \leq \lambda  D_\eta (x_{n-1} , x_{n})  .\]

From Corollary \ref{cochi1}, we know that $\{x_n\}$ is Cauchy. Since
$(X, D_\eta)$ is complete, then there exists $x^* \in X$ such that
\[\lim\limits_{n\to \infty}x_n=x^*.\]
On the other side, using \eqref{conditionfin}, one readily sees that
\[D_\eta (T x^*, T x_n )\leq \lambda \max \left\lbrace 
D_\eta(x^*, x_n ) , D_\eta(x^*, T x^*) , D_\eta(x_n , x_ {n+1} ) , D_\eta(x^*, x_{n+1} ) , D_\eta(x_n , T x^*)
 \right\rbrace.\]	
Now, take the limit as $n \to \infty$, then by and since $d_\eta$ is continuous, we obtain
$$D_\eta(T x^*, x^*) \leq \lambda D_\eta(x^*, T x^*).
$$	
Since $\lambda<1$, then $T x^* = x^*.$

For uniqueness, assume $x, y \in X$ and $x \neq y$ are two fixed points of $T$. Using \eqref{conditionfin}, we have

\[= D_\eta(T x, T y)
\leq (\alpha + 2\delta\eta) D_\eta(x, y) \leq \lambda D_\eta (x, y) .  \]

Since $\lambda < 1$, then $ D_\eta(x, y)  = 0$, which implies $x = y.$

The proof is complete.
\end{proof}

\begin{example}
	Let $X = \left[ 0, \frac{1}{4} \right]$. Let $E=\mathbb{R}$ and $P=\mathbb{R}^+$. Then $(X, d_\eta )$ is an $\eta$-cone metric space with 
	%$d_\eta( x, y ) : X \times X \to \mathbb{R}^+$ and
	 $$d_\eta(x,y) = (x-y)^2, \text{ and } \eta(x,y)= x+y+2. $$
	 
	 The $\eta$-metric space $(X,D_\eta)$, generated by $(X,d_\eta)$ has the same structure since $$D_\eta(x,y)=|d_\eta(x,y)|=(x-y)^2.$$
	 
	 Then $d_\eta$ is a complete $\eta$-metric on $X$. Define $T : X \to X$ by $Tx = x^2$. 
	 
	 Let $ \beta=\gamma=\delta=0$ and
	 $\alpha(x,y) =\frac{1}{4},$ we have
	 \[  D_\eta( Tx, Ty ) \leq \frac{1}{4}
	 D_\eta( x, y ). \]
	 
	 Note that for each $x \in X, T^n x = x^{2n}$. Thus we obtain:
	 \[\lim\limits_{n,m\to \infty} \eta(T^mx, T^nx)<4.\]
	 
Therefore, all conditions of Theorem \ref{fin} are satisfied hence T has a unique fixed point.	 
\end{example}

\bibliographystyle{amsplain}

\end{document}